\newcommand{\id}{\mathsf{id}}
\renewcommand{\O}{\Omega}
\newcommand{\dom}{\mathcal{D}}
\newcommand{\kernel}{\kappa}
\newcommand{\cK}{\mathcal K}
\renewcommand{\d}{\operatorname{d}\!}
\newcommand{\bm}{\boldsymbol}
\DeclareMathOperator{\diam}{diam}
\DeclareMathOperator{\spn}{span}
\DeclareMathOperator\dist{dist}
\DeclareMathOperator\supp{supp}
\pgfplotsset{compat=newest}
\newtheorem{theorem}{Theorem}[section]
\newtheorem{lemma}[theorem]{Lemma}
\newtheorem{corollary}[theorem]{Corollary}
\newtheorem{proposition}[theorem]{Proposition}
\newtheorem{definition}[theorem]{Definition}
\newtheorem{remark}[theorem]{Remark}
\def\letters{a,b,c,d,e,f,g,h,i,j,k,l,m,n,o,p,q,r,s,t,u,v,w,x,y,z}
\def\Letters{A,B,C,D,E,F,G,H,I,J,K,L,M,N,O,P,Q,R,S,T,U,V,W,X,Y,Z}
\Letters \do{%
  \expandafter\edef\csname\@l bb\endcsname{%
  \noexpand\ensuremath{\noexpand\mathbb{\@l}}}%
  \expandafter\edef\csname\@l bf\endcsname{%
  {\noexpand\bf \@l}}%
  \expandafter\edef\csname\@l cal\endcsname{%
  \noexpand\ensuremath{\noexpand\mathcal{\@l}}}%
  \expandafter\edef\csname\@l eu\endcsname{%
  \noexpand\ensuremath{\noexpand\EuScript{\@l}}}%
  \expandafter\edef\csname\@l frak\endcsname{%
  \noexpand\ensuremath{\noexpand\mathfrak{\@l}}}%
  \expandafter\edef\csname\@l rm\endcsname{%
  {\noexpand\rm \@l}}%
  \expandafter\edef\csname\@l scr\endcsname{%
  \noexpand\ensuremath{\noexpand\mathscr{\@l}}}%
}
\letters \do{%
  \expandafter\edef\csname\@l bf\endcsname{%
  {\noexpand\bf \@l}}%
  \expandafter\edef\csname\@l frak\endcsname{%
  \noexpand\ensuremath{\noexpand\mathfrak{\@l}}}%
  \expandafter\edef\csname\@l scr\endcsname{%
  \noexpand\ensuremath{\noexpand\mathscr{\@l}}}%
}
 \definecolor{shadecolor}{rgb}{0.6, 0.6, 0.6} 
  \definecolor{darkgreen}{rgb}{0, 0.6, 0}
\newcommand{\isdef}{\mathrel{\mathrel{\mathop:}=}}
\newcommand{\defis}{\mathrel{=\mathrel{\mathop:}}}
\begin{document}
\title{Multiresolution kernel matrix algebra}
\author{H.~Harbrecht}
\address{H.~Harbrecht,
Departement f\"ur Mathematik und Informatik, 
Universit\"at Basel, 
Spiegelgasse 1, 4051 Basel, Schweiz.}
\email{helmut.harbrecht@unibas.ch}
\author{M.~Multerer}
\address{
M.~Multerer,
Istituto Eulero,
USI Lugano,
Via la Santa 1, 6962 Lugano, Svizzera.}
\email{michael.multerer@usi.ch}
\author{O.~Schenk}
\address{
O.~Schenk,
Institute of Computing,
USI Lugano,
Via la Santa 1, 6962 Lugano, Svizzera.}
\email{olaf.schenk@usi.ch}
\author{Ch.~Schwab}
\address{
Ch.~Schwab,
Seminar für Angewandte Mathematik,
ETH Z\"urich,
R\"amistrasse 101, 8092 Z\"urich, Schweiz.}
\email{christoph.schwab@sam.math.ethz.ch}
\pgfplotsset{select coords between index/.style 2 args={
    x filter/.code={
        \ifnum\coordindex<#1\def\pgfmathresult{}\fi
        \ifnum\coordindex>#2\def\pgfmathresult{}\fi
    }
}}
\begin{abstract}
We propose a sparse algebra for samplet compressed kernel matrices,
to enable efficient scattered data analysis.
We show the compression of kernel matrices by means of samplets 
produces optimally sparse
matrices in a certain S-format. 
It can be performed in cost and memory that scale essentially linearly
with the matrix size $N$,
for kernels of finite differentiability, along with
addition and multiplication of S-formatted matrices.
We prove and exploit the fact that 
the inverse of a kernel matrix (if it exists) is compressible in 
the S-format as well. 
Selected inversion allows to directly compute the 
entries in the corresponding sparsity pattern.
The S-formatted matrix operations enable the efficient, approximate 
computation of more complicated matrix functions such as 
${\bm A}^\alpha$ or $\exp({\bm A})$.
The matrix algebra is justified mathematically by
pseudo differential calculus.
As an application,
efficient Gaussian process learning algorithms for
spatial statistics is considered. 
Numerical results are presented
to illustrate and quantify our findings.
\end{abstract}

\maketitle
\section{Introduction}\label{sec:intro}
The concept of \emph{samplets} has been introduced in \cite{HM21}
by abstracting the wavelet construction from \cite{TW03} 
to general discrete data sets in euclidean space.
A samplet basis is a multiresolution 
analysis of discrete signed measures, where stability 
is entailed by the orthogonality of the basis. 
Samplets are data-centric and can be constructed 
such that their measure integrals vanish for polynomials
up to a predefined degree. 
Thanks to this \emph{vanishing moment property in ambient space}, 
kernel matrices, as they arise in scattered data approximation,
become quasi-sparse in the samplet basis. This means that these
kernel matrices are
\emph{compressible} in samplet coordinates, \emph{$S$-compressible} 
for short, and can be replaced 
by sparse matrices. We call the resulting sparsity pattern the
\emph{compression pattern}. 
The latter has been characterized in \cite[Section~5.3]{HM21}. 
Given a quasi-uniform data set of cardinality $N$,
i.e., the distance between neighboring points is
uniformly bounded from below and above by $N^{-1/d}$ with
$d \geq 1$ being the spatial dimension of the data,
the $S$-compressed kernel matrix contains only
$\mathcal{O}(N\log N)$ relevant entries, 
for kernels of possibly low regularity.

In this article, we develop \emph{fast arithmetic operations}
for $S$-compressed kernel matrices. 
By fixing the sparsity
pattern, we can perform addition and multiplication of kernel 
matrices with high precision in essentially linear cost. 
The derived cost bounds assume quasi-uniformity of the data points.
Even so, all algorithms can still be applied 
if the quasi-uniformity assumption does not hold. In this
case, however, the established cost bounds may become invalid.
Similar approaches for realizing arithmetic
operations of nonlocal operators exist by means hierarchical matrices,
see \cite{Hackbusch,DHM19,Boe10,HKT08,GHK03}, and by means of wavelets,
see \cite{SchW06,BM02,B96}.

We prove that the inverses of (regularized) kernel matrices
are compressible with respect to the original compression pattern.
We can thus employ the selected inversion algorithm proposed 
in \cite{selinv}, to efficiently approximate the inverse.
Our concrete implementation is based on a supernodal left-looking
LDLT-factorization of the underlying matrix, which is available in
the sparse, direct solver \texttt{Pardiso}, see~\cite{PARDISO:2022}. 
The selected inversion computes (in the absence of rounding) the
exact matrix inverse of the $S$-compressed matrix on its matrix
pattern. Likewise, matrix addition and matrix multiplication are
performed exactly on the prescribed compression pattern.
This means that, the relevant matrix coefficients are 
computed exactly when adding, multiplying, and inverting 
$S$-compressed kernel matrices. The only error introduced is the
matrix compression error issuing from the restriction to the
compression pattern.

Having a fast formatted matrix addition and fast matrix inversion 
at hand enables the fast approximate evaluation of 
holomorphic operator functions via contour integrals in order 
to derive more complicated matrix functions.
This has been envisioned in \cite{BM02}
(``We conjecture and provide numerical evidence that functions
of operators inherit this property'') and suggested in \cite{Hale2008}. 
In the present paper we prove, 
using the samplet algebra, that, up to (exponentially small) contour
quadrature errors, these contour integrals are 
computed exactly on the prescribed pattern. 
This is in contrast to previously proposed methods. 
In addition, many applications particularly require
the computation of a subset of the elements of a given matrix inverse.
Important examples are sparse inverse covariance matrix estimation
in $\ell^1$-regularized Gaussian maximum likelihood estimation, see
\cite{HSDRP,BESS}, or integrated nested Laplace approximations
for approximate Bayesian inference, cp.\ \cite{RINLA:2021}.
Other examples of computing a subset of the inverse are electronic
structure calculations of materials utilizing multipole 
expansions, where the diagonal and occasionally sub-diagonals 
of the discrete Green's function are required to determine 
the electron density \cite{Darve2008,LYYE}.

We provide a rigorous
theoretical underpinning of the algorithms under consideration
by means of pseudodifferential 
calculus \cite{Taylor81,HorIII}. 
To this end,
we focus on kernels of reproducing kernel 
Hilbert spaces and assume that the associated 
integral operators 
correspond, via the Schwarz kernel theorem, 
to classical, elliptic pseudodifferential operators,
from the H\"ormander class $S^m_{1,0}$, cp.\ \cite{HorIII}. 
A prominent example of such kernels is the Mat\'ern class of kernels,
see \cite{MAT}, also called Sobolev splines \cite{FY11}. 
The latter are known to generate the Sobolev spaces of positive 
order, and correspond to fractional powers of the shifted 
Laplacian.
We prove that such 
pseudodifferential operators are compressible in 
samplet coordinates, 
meaning that for numerical representation,
only the coefficients in the 
associated compression pattern need to be computed. 
Admissible classes comprise in particular the smooth
H\"ormander class $S^m_{1,0}$,
but also considerably larger 
kernel classes of finite smoothness,
which admit Calderon-Zygmund estimates and
an appropriate operator calculus, see, e.g., \cite{Abels,Taylor91}.
The corresponding operator calculus implies that 
sums, concatenations, powers and holomorphic functions 
of self-adjoint, elliptic pseudodifferential operators
yield again pseudodifferential operators.
As a consequence the corresponding
operations on kernel matrices in samplet coordinates 
result again in compressible matrices.

The rest of this article is structured as follows. 
In Section \ref{section:RKHS}, we briefly introduce the
scattered data framework under consideration and recall
the relevant theory for reproducing kernel Hilbert spaces.
The construction of samplets and the samplet matrix 
compression from \cite{HM21} are summarized in
Section~\ref{sec:compression}.
The main contribution of this article is Section~\ref{sec:arithmetic}.
Here, we develop and analyze arithmetic operations for 
compressed kernel matrices in samplet coordinates. In 
Section~\ref{sec:results}, we perform numerical experiments 
in order to qualify and quantify the matrix algebra. 
Beyond benchmarking experiments, we consider here
the computation of an implicit surface
from scattered data using Gaussian process learning.
Finally, 
the required details from the theory pseudodifferential operators, 
especially the associated calculus,
are collected in Appendix~\ref{sec:pseudos}.

Throughout this article, in order to avoid the repeated use of
generic but unspecified constants, by \(C\lesssim D\)
we indicate that $C$ can be bounded by a multiple of $D$,
independently of parameters which $C$ and $D$ may depend on.
Moreover, \(C\gtrsim D\) is defined as \(D\lesssim C\)
and \(C\sim D\) as \(C\lesssim D\) and \(D\lesssim C\).

\section{Reproducing kernel Hilbert spaces}
\label{section:RKHS}
Let \((\Hcal,\langle\cdot,\cdot\rangle_\Hcal)\) be a 
Hilbert space of functions \(h\colon\Omega\to\Rbb\)
with dual space \(\Hcal'\). Herein,
\(\Omega\subset\Rbb^d\) is a given bounded domain or
a lower-dimensional manifold. 
Furthermore, let \(\kernel\) be a symmetric 
and positive definite (SPD) kernel, i.e.,
\([\kernel({\bm x}_i,{\bm x}_j)]_{i,j=1}^N\) is a symmetric
and positive semi-definite matrix for each \(N\in\Nbb\) and any
point selection \({\bm x}_1,\ldots,{\bm x}_N\in\Omega\).
We recall that \(\kernel\) is the reproducing kernel for \(\Hcal\),
iff \(\kernel({\bm x},\cdot)\in\Hcal\) for every
\({\bm x}\in\Omega\) and
\(h({\bm x})=\langle\kernel({\bm x},\cdot),h\rangle_\Hcal\)
for every \(h\in\Hcal\). In this case, we call 
\((\Hcal,\langle\cdot,\cdot\rangle_\Hcal)\) a reproducing
kernel Hilbert space (RKHS).

Let \(X\isdef\{{\bm x}_1,\ldots,{\bm x}_N\}\subset\Omega\) 
denote a set of $N$ mutually distinct points. 
With respect to the set \(X\), we introduce 
the subspace
\begin{equation}\label{eq:HX}
\Hcal_X\isdef
\spn\{\kernel({\bm x}_1,\cdot),\ldots,\kernel({\bm x}_N,\cdot)\}
\subset\Hcal.
\end{equation}
Corresponding to \(\Hcal_X\), we consider the subspace
\(\Xcal\isdef\spn\{\delta_{{\bm x}_1},
\ldots,\delta_{{\bm x}_N}\}\subset\Hcal'\),
which is spanned by the Dirac measures supported at the points of 
\(X\), i.e.,
\[
\delta_{{\bm x}_i}({\bm x})\isdef
\begin{cases}
1,&\text{if }{\bm x}={\bm x}_i,\\
0,&\text{otherwise}.
\end{cases}
\]
For a continuous function $f\in C(\Omega)$, we use the notation
\[
(f,\delta_{{\bm x}_i})_\Omega\isdef
\int_{\Omega}f({\bm x})\delta_{{\bm x}_i}(\d{\bm x})
=f({\bm x}_i).
\]
As the kernel \(\kernel({\bm x},\cdot)\) is the Riesz representer of
the point evaluation \((\cdot,\delta_{\bm x})_\Omega\), we particularly have 
\[
(h,\delta_{\bm x})_\Omega
=\langle\kernel({\bm x},\cdot),h\rangle_\Hcal\quad\text{for every }
h\in\Hcal.
\]
Thus, the space \(\Xcal\) is isometrically isomorphic to the subspace
$\Hcal_X$ from \eqref{eq:HX} and
we identify
\[
u=\sum_{i=1}^Nu_i\delta_{{\bm x}_i}\in\Xcal\quad\text{with}\quad
\hat{u}=\sum_{i=1}^Nu_i\kernel({\bm x}_i,\cdot)\in\Hcal_X.
\]
Later on, we endow \(\Xcal\) with the inner product 
\begin{equation}\label{eq:innerp}
\langle u,v\rangle_\Xcal\isdef\sum_{i=1}^N u_iv_i,\quad\text{where }
u=\sum_{i=1}^Nu_i\delta_{{\bm x}_i},\ v=\sum_{i=1}^Nv_i\delta_{{\bm x}_i}.
\end{equation}
This inner product is different from the
restriction of the canonical one in \(\Hcal\) to \(\Hcal_X\).
The latter is given by
\[
\langle\hat{u},\hat{v}\rangle_\Hcal={\bm u}^\intercal{\bm K}{\bm v}
\]
with the symmetric and positive semi-definite \emph{kernel matrix}
\begin{equation}\label{eq:KernelMatrix}
{\bm K}\isdef [\kernel({\bm x}_i,{\bm x}_j)]_{i,j=1}^N\in\Rbb^{N\times N}
\end{equation}
and \({\bm u}\isdef[u_i]_{i=1}^N\) and \({\bm v}\isdef[v_i]_{i=1}^N\).

A consequence of the duality between \(\Hcal_X\)
and \(\Xcal\) is that
the \(\Hcal\)-orthogonal projection
of a function \(h\in\Hcal\) onto \(\Hcal_X\) is given by the 
interpolant
\[
s_h({\bm x})\isdef\sum_{i=1}^N\alpha_i\kernel({\bm x}_i,\cdot),
\]
which satisfies $s_h({\bm x}_i) = h({\bm x}_i)$ for all ${\bm x}_i\in X$. 
The associated coefficients \({\bm\alpha}=[\alpha_i]_{i=1}^N\)
are given by the solution to the linear system
\begin{equation}\label{eq:LSE}
{\bm K}{\bm\alpha}={\bm h}
\end{equation}
with right hand side \({\bm h}=[h({\bm x}_i)]_{i=1}^N\).

From \cite[Corollary 11.33]{Wendland2004}, 
we have the following approximation result.
\begin{theorem}\label{thm:Kernelapprox}
Let \(\Omega\subset\Rbb^d\) be a bounded Lipschitz domain satisfying
an interior cone condition. Suppose that the Fourier transform of the kernel
\(\kernel({\bm x}-{\bm y})\)
satisfies
\begin{equation}\label{eq:MatFourier}
\widehat{\kernel}({\bm{\bm\xi}})\sim(1+\|{\bm{\bm\xi}}\|_2^2)^{-\tau},
\quad{\bm{\bm\xi}}\in\Rbb^d.
\end{equation}
Then for \(0\leq t < \lceil\tau\rceil-d/2-1\), the error between
\(f\in H^\tau(\Omega)\) and its interpolant
\(s_{f,X}\) satisfies the bound
\[
\|f-s_{f,X}\|_{H^{t}(\Omega)}\lesssim h_{X,\Omega}^{\tau-t}\|f\|_{H^\tau(\Omega)}
\]
for a sufficiently small \emph{fill distance}
\begin{equation}\label{eq:FillDistance}
h_{X,\Omega}
\isdef\sup_{{\bm x}\in\Omega}\min_{{\bm x}_i\in X}
\|{\bm x}-{\bm x}_i\|_2.
\end{equation}
\end{theorem}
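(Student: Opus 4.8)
The statement is classical---it is precisely \cite[Corollary~11.33]{Wendland2004}---and the route I would follow to establish it reduces the assertion to a purely Sobolev-space estimate in three steps. \emph{Step 1 (native space coincides with a Sobolev space).} Since $\kernel$ is translation invariant, the associated native (reproducing-kernel) space on $\Rbb^d$ is characterised through its Fourier transform, and the decay \eqref{eq:MatFourier} identifies it, up to norm equivalence, with $H^\tau(\Rbb^d)$. Composing with a Stein-type bounded extension operator $E\colon H^\tau(\Omega)\to H^\tau(\Rbb^d)$---available because $\Omega$ is a bounded Lipschitz domain---and with the restriction $H^\tau(\Rbb^d)\to H^\tau(\Omega)$, one obtains that the native space over $\Omega$, which is precisely $\Hcal$, coincides with $H^\tau(\Omega)$ up to norm equivalence, with constants depending only on $\kernel$ and $\Omega$. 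Moreover, as recalled above, the interpolant $s_{f,X}$ is the $\Hcal$-orthogonal projection of $f$ onto $\Hcal_X$, whence by the Pythagorean identity $\|f-s_{f,X}\|_\Hcal\le\|f\|_\Hcal$, that is, $\|f-s_{f,X}\|_{H^\tau(\Omega)}\lesssim\|f\|_{H^\tau(\Omega)}$.

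\emph{Step 2 (sampling inequality / zeros lemma).} The core ingredient is the bound
\[
\|u\|_{H^t(\Omega)}\lesssim h_{X,\Omega}^{\tau-t}\,\|u\|_{H^\tau(\Omega)}\qquad\text{for all }u\in H^\tau(\Omega)\text{ with }u|_X=0,
\]
valid once $h_{X,\Omega}$ is small enough. I would prove it by a standard localisation: fix an integer $m>t$ admissible for the relevant embedding, i.e.\ with $m$ below the threshold dictated by $\tau-d/2$ (this is where the range $0\le t<\lceil\tau\rceil-d/2-1$ enters, together with $H^\tau(\Omega)\hookrightarrow C(\overline\Omega)$, which makes $u|_X$ meaningful); cover $\overline\Omega$ by balls $B_j$ of radius $\sim h_{X,\Omega}$ with finite overlap such that, thanks to the interior cone condition, each $B_j$ contains a subset of $X$ that is unisolvent (a norming set) for polynomials of degree less than $m$, with uniformly bounded Lagrange functions. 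On each $B_j$, the vanishing of $u$ at these points lets one replace $u$ by $u-p_j$ for a suitable local polynomial $p_j$ and apply a Bramble--Hilbert estimate together with a scaling argument to get $\|u\|_{H^t(B_j)}\lesssim h_{X,\Omega}^{\tau-t}|u|_{H^\tau(B_j)}$; summing the squares over $j$ and using the bounded overlap yields the global inequality.

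\emph{Step 3 (conclusion).} Apply the sampling inequality of Step~2 to $u=f-s_{f,X}$, which vanishes on $X$ because $s_{f,X}({\bm x}_i)=f({\bm x}_i)$ for all ${\bm x}_i\in X$, and combine with the stability bound of Step~1:
\[
\|f-s_{f,X}\|_{H^t(\Omega)}\lesssim h_{X,\Omega}^{\tau-t}\|f-s_{f,X}\|_{H^\tau(\Omega)}\lesssim h_{X,\Omega}^{\tau-t}\|f\|_{H^\tau(\Omega)}.
\]

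I expect the main obstacle to be Step~2. The norm equivalence in Step~1 is routine once the Fourier-side description of the native space is in place, and Step~3 is a one-line combination; but the sampling inequality requires the delicate construction of local polynomial reproduction with constants uniform over the covering balls---this is exactly where the interior cone condition and a Markov/norming-set argument are needed---and a careful accounting of the Sobolev indices to obtain both the sharp exponent $\tau-t$ and the stated admissible range of $t$.
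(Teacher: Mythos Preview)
Your proposal is correct and follows the standard route; the paper itself does not supply a proof but merely cites \cite[Corollary~11.33]{Wendland2004}, and your three-step outline (native space $\cong H^\tau$, sampling inequality for functions vanishing on $X$, then combine with the projection stability) is precisely the argument behind that reference.
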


One class of kernels satisfying the conditions of Theorem~\ref{thm:Kernelapprox}
are the \emph{isotropic Mat\'ern kernels}, 
also called \emph{Sobolev splines}, see \cite{FY11}.
These kernels play an important role in applications, such as spatial statistics
\cite{Rasmussen2006}. They are given by 
\begin{equation}\label{eq:matern}
\kernel_\nu(r)\isdef\frac{2^{1-\nu}}{\Gamma(\nu)}
\bigg(\frac{\sqrt{2\nu}r}{\ell}\bigg)^\nu
K_\nu\bigg(\frac{\sqrt{2\nu}r}{\ell}\bigg)
\end{equation}
with \(r\isdef\|{\bm x}-{\bm y}\|_2\), 
smoothness parameter $\nu>0$ and length scale parameter 
$\ell>0$, see \cite{MAT,Rasmussen2006}. Here, \(K_\nu\) 
denotes the modified Bessel function of the second kind. 
Specifically, property \eqref{eq:MatFourier} holds with
\begin{equation}\label{eq:MaternSymbol}
\widehat{\kernel}_\nu({\bm{\bm\xi}}) 
= \alpha\bigg(1+\frac{\ell^2}{2\nu}\|{\bm {\bm\xi}}\|_2^2\bigg)^{-\nu-d/2},
\end{equation}
where $\alpha$ is a scaling factor depending on 
$\nu$, $\ell$ and $d$, see \cite{MAT}. 
Particularly, the Mat\'ern 
kernels are the reproducing kernels 
of the Sobolev spaces $H^{\nu+d/2}(\mathbb{R}^d)$,
see also \cite{Wendland2004}.

For half integer values of \(\nu\), i.e., for \(\nu=p+1/2\) with 
\(p\in\mathbb{N}_0\), 
the Mat\'ern kernels have an explicit representation given by
\[
\kernel_{p+1/2}(r)=\exp\bigg(\frac {-\sqrt{2\nu}r}{\ell}\bigg)
\frac{p!}{(2p)!}
\sum_{q=0}^p\frac{(p+q)!}{q!(p-q)!}
\bigg(\frac {\sqrt{8\nu}r}{\ell}\bigg)^{p-q}.
\]
The limit case \(\nu\to\infty\) gives rise to the Gaussian kernel
\[
\kernel_\infty(r) = \exp\bigg(\frac{-r^2}{2\ell^2}\bigg).
\]

Our subsequent compression analysis covers the Mat\'{e}rn kernels,
but has considerably wider scope. Indeed, rather large classes of 
pseudodifferential operators will be admissible. As suitable classes 
of such operators are known to define an algebra, properties of 
arithmetic expressions of the underlying kernels, such as off-diagonal 
coefficient decay and matrix compressibility, can directly be inferred.
Equally important, we show that these properties of the operator algebras
are to some extent transferred also to the corresponding finitely represented
structures, i.e., we show the corresponding matrix representation likewise 
are algebras in the compressed format. We refer to Appendix~\ref{sec:pseudos} 
for the details and properties of pseudodifferential operators in this article.

\section{Samplet matrix compression}
\label{sec:compression}
For the readers convenience, we recall in this section
the concept of \emph{samplets} as it has been introduced in \cite{HM21}.
\subsection{Samplets}
Samplets 
are defined based on a sequence of spaces \(\{\Xcal_j\}_{j=0}^J\)
forming a multiresolution analysis, i.e.,
\begin{equation}\label{eq:multiscale}
  \Xcal_0\subset\Xcal_1\subset\cdots\subset\Xcal_J = \Xcal.
\end{equation}
Rather than using a single scale from the
multiresolution analysis \eqref{eq:multiscale}, the idea of 
samplets is to keep track of the increment of information 
between two consecutive levels $j$ and $j+1$. Since we 
have $\Xcal_{j}\subset \Xcal_{j+1}$, we may decompose 
\begin{equation}\label{eq:decomposition}
\Xcal_{j+1} =\Xcal_j\overset{\perp}{\oplus}\Scal_j
\end{equation}
by using the \emph{detail space} $\Scal_j$, where orthogonality
is to be understood with respect to the (discrete)
inner product defined in \eqref{eq:innerp}.

Let ${\bm\Sigma}_j$ be a basis
of the detail space $\Scal_j$ in $\Xcal_j$. 
By choosing a basis \(\bm\Phi_0\) of $\Xcal_0$ and
recursively applying the decomposition \eqref{eq:decomposition},
we see that the set
\[
\mathbf\Sigma_J = {\bm\Phi}_0 \bigcup_{j=0}^J{\bm\Sigma}_j
\]
forms a basis of \(\Xcal_J=\Xcal\), which we call a \emph{samplet basis}. 

In order to employ samplets for the compression of
kernel matrices, it is desirable that the signed measures
$\sigma_{j,k}\in\Xcal_j\subset\Hcal'$
have isotropic convex hulls of supports, and are localized with 
respect to the corresponding discretization level $j$, i.e., 
\begin{equation}\label{eq:locality}
\diam(\supp\sigma_{j,k})\sim 2^{-j/d},
\end{equation}
and that they are stable with respect to the inner product defined
in \eqref{eq:innerp}, i.e.,
\[
\langle \sigma_{j,k},\sigma_{j',k'}\rangle_\Xcal=0
\quad\text{for }(j,k)\neq (j',k').
\]
Furthermore, an essential
ingredient is the 
\emph{vanishing moment condition of order $q+1$}, i.e.,
\begin{equation}\label{eq:vanishingMoments}
 (p,\sigma_{j,k})_\Omega
 = 0\quad \text{for all}\ p\in\Pcal_q(\Omega),
\end{equation}
where \(\Pcal_q(\Omega)\) is the space of all polynomials
with total degree at most \(q\).
We say then that the samplets have \emph{vanishing moments} of
order $q+1$.
%
\begin{remark}
Associated to each samplet
\(\sigma_{j,k} = \sum_{\ell=1}^N\beta_\ell\delta_{{\bm x}_{i_\ell}}\),
we find a uniquely determined function
\[
\hat{\sigma}_{j,k}\isdef
\sum_{\ell=1}^N\beta_\ell \kernel({\bm x}_{i_\ell},\cdot)\in\mathcal{H}_X,
\]
which also exhibits vanishing moments, i.e.,
\[
\langle\hat{\sigma}_{j,k},h\rangle_\Hcal=0
\]
for any \(h\in\Hcal\) which satisfies
\(h|_{\supp\sigma_{j,k}}\in\Pcal_q(\supp\sigma_{j,k})\).
\end{remark}

\subsection{Construction of samplets}
The starting point for the construction of samplets
is the multiresolution analysis \eqref{eq:multiscale}.
Its construction
is based on a hierarchical clustering of the set \(X\).

\begin{definition}\label{def:cluster-tree}
Let $\mathcal{T}=(P,E)$ be a binary tree with vertices $P$ and edges $E$.
We define its set of leaves as
\[
\mathcal{L}(\mathcal{T})\isdef\{\nu\in P\colon\nu~\text{has no sons}\}.
\]
The tree $\mathcal{T}$ is a \emph{cluster tree} for
the set $X=\{{\bm x}_1,\ldots,{\bm x}_N\}$, iff
the set $X$ is the \emph{root} of $\mathcal{T}$ and
all $\nu\in P\setminus\mathcal{L}(\mathcal{T})$
are disjoint unions of their two sons.

The \emph{level} \(j_\nu\) of $\nu\in\mathcal{T}$ is its distance from the root,
i.e., the number of son relations that are required for traveling from
$X$ to $\nu$. The \emph{depth} \(J\) of \(\Tcal\) is the maximum level
of all clusters. We define the set of clusters
on level $j$ as
\[
\mathcal{T}_j\isdef\{\nu\in\mathcal{T}\colon \nu~\text{has level}~j\}.
\]
The cluster tree is \emph{balanced}, iff $|\nu|\sim 2^{J-j_{\nu}}$.
\end{definition}

To bound the diameter of the clusters, we
introduce the \emph{separation radius}
\begin{equation}\label{eq:SepRadius}
q_X\isdef\frac 1 2\min_{i\neq j}\|{\bm x}_i-{\bm x}_j\|_2
\end{equation}
and require \(X\) to be \emph{quasi-uniform}. 

\begin{definition}
\label{def:quasiunif}
The data set $X\subset\Omega$ is \emph{quasi-uniform}
if the fill distance \eqref{eq:FillDistance} is proportional 
to the separation radius \eqref{eq:SepRadius}, i.e., 
there exists a constant $c = c(X,\Omega)\in (0,1)$ 
such that
\[
0<c\le\frac{q_X}{h_{X,\Omega}} \leq c^{-1}.
\]
\end{definition}

Roughly speaking, the points ${\bm x}\in X$ are 
equispaced if $X\subset\Omega$ is quasi-uniform. 
This immediately implies the following result.

\begin{lemma} Let \(\Tcal\) be a cluster tree. For 
\(\nu\in\Tcal\), the \emph{bounding box} $B_{\nu}$ of 
\(\nu\) is the smallest axis-parallel cuboid that contains 
all points of $\nu$. If \(X\subset\Omega\) is quasi-uniform,
then there holds
\[
\frac{|B_\nu|}{|\Omega|} \sim\frac{|B_\nu\cap X|}{N}
\]
with the constant hidden in $\sim$ depending only
on the constant $c(X,\Omega)$ in Definition~\ref{def:quasiunif}.
In particular, we have \(\diam(\nu)\sim 2^{-j_\nu/d}\)
for all clusters \(\nu\in\Tcal\).
\end{lemma}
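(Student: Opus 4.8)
The plan is to establish the volume-counting equivalence first and then deduce the diameter bound as a corollary.

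\textbf{Step 1: Volume versus cardinality.} I would fix a cluster $\nu \in \mathcal{T}$ with bounding box $B_\nu$. Since $X$ is quasi-uniform, the separation radius $q_X$ and fill distance $h_{X,\Omega}$ are comparable, and both scale like $N^{-1/d}$ (this follows by a standard volume packing argument: $N$ balls of radius $q_X$ around the points are disjoint and contained in a fixed neighborhood of $\Omega$, giving $N q_X^d \lesssim |\Omega|$, while the balls of radius $h_{X,\Omega}$ cover $\Omega$, giving $|\Omega| \lesssim N h_{X,\Omega}^d$; quasi-uniformity ties the two together). For the ratio estimate, note that the points of $X$ contained in $B_\nu$ are $q_X$-separated, so disjoint balls of radius $q_X$ centered at them fit inside a slight enlargement of $B_\nu$, yielding $|B_\nu \cap X| \cdot q_X^d \lesssim |B_\nu|$. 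Conversely, balls of radius $h_{X,\Omega}$ centered at points of $X$ cover $\Omega \supset B_\nu$ (up to handling boundary effects with the interior cone condition / Lipschitz assumption), so any subcuboid of side length $\gtrsim h_{X,\Omega}$ must contain at least one point; tiling $B_\nu$ by such subcuboids gives $|B_\nu| \lesssim |B_\nu \cap X| \cdot h_{X,\Omega}^d$. Combining these two inequalities with $q_X \sim h_{X,\Omega} \sim (|\Omega|/N)^{1/d}$ produces
\[
\frac{|B_\nu|}{|\Omega|} \sim \frac{|B_\nu \cap X|}{N},
\]
with the implied constants depending only on $c(X,\Omega)$, $d$, and the geometry of $\Omega$.

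\textbf{Step 2: From cardinality to diameter.} By the definition of a cluster tree, $\nu = B_\nu \cap X$ (the cluster consists exactly of its points), and if the tree is balanced in the sense of Definition~\ref{def:cluster-tree}, $|\nu| \sim 2^{J-j_\nu}$. On the other hand, the full set has $N = |X| \sim 2^J$ points (the root is level $0$, leaves are level $J$, and balancing forces $|X| \sim 2^J$). Plugging $|B_\nu \cap X| = |\nu| \sim 2^{J - j_\nu}$ and $N \sim 2^J$ into the ratio identity from Step~1 gives $|B_\nu| \sim |\Omega| \, 2^{-j_\nu}$. Since $B_\nu$ is an axis-parallel cuboid and the quasi-uniformity additionally controls its aspect ratio (a thin cuboid of small volume would still have to contain the right number of $q_X$-separated points, forcing all side lengths to be comparable to $|B_\nu|^{1/d}$), we conclude $\operatorname{diam}(\nu) \le \operatorname{diam}(B_\nu) \sim |B_\nu|^{1/d} \sim 2^{-j_\nu/d}$, and the matching lower bound follows the same way. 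This is exactly the localization property anticipated in \eqref{eq:locality}.

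\textbf{Main obstacle.} The delicate point is the uniform control of the \emph{shape} of the bounding boxes: the volume-to-cardinality equivalence alone does not immediately bound $\operatorname{diam}(B_\nu)$ unless one rules out degenerate, highly anisotropic cuboids. The argument that the $q_X$-separation of the points inside $B_\nu$ forces each side length of $B_\nu$ to be $\sim |B_\nu|^{1/d}$ is where quasi-uniformity is really used, and it implicitly relies on the cluster-splitting rule producing reasonably shaped clusters (e.g. splitting along the longest edge, as is standard for cardinality-balanced binary cluster trees). A secondary technical nuisance is the treatment of clusters near $\partial\Omega$, where the covering-by-balls argument needs the interior cone condition to guarantee that $B_\nu$ still captures enough volume of $\Omega$; this only affects constants, not the asymptotics.
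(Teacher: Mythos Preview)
The paper does not actually prove this lemma: it is stated immediately after Definition~\ref{def:quasiunif} with only the sentence ``Roughly speaking, the points ${\bm x}\in X$ are equispaced if $X\subset\Omega$ is quasi-uniform. This immediately implies the following result.'' So there is no proof in the paper to compare against; your packing/covering argument in Step~1 is precisely the standard reasoning the authors are tacitly invoking, and it is correct.

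Your Step~2 and the ``main obstacle'' paragraph are in fact more honest than the paper. The implication $|B_\nu|\sim 2^{-j_\nu}$ $\Rightarrow$ $\diam(B_\nu)\sim 2^{-j_\nu/d}$ genuinely requires aspect-ratio control of the bounding boxes, and this does \emph{not} follow from quasi-uniformity of $X$ alone for an arbitrary binary cluster tree (imagine always bisecting along the first coordinate). The paper silently assumes a geometrically reasonable splitting rule---the standard one being bisection along the longest edge of $B_\nu$---together with the balance condition $|\nu|\sim 2^{J-j_\nu}$ from Definition~\ref{def:cluster-tree}, neither of which is stated as a hypothesis of the lemma. So the gap you flagged is real, but it is a gap in the lemma's formulation rather than in your argument; with the usual longest-edge splitting, an easy induction bounds the aspect ratio of every $B_\nu$ by a constant depending only on $d$, and then your Step~2 goes through.
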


Samplets with vanishing moments are obtained
recursively by employing a \emph{two-scale} transform between 
basis elements on a cluster $\nu$ of level $j$. To this end, 
we represent \emph{scaling distributions} $\mathbf{\Phi}_{j}^{\nu} 
= \{ \varphi_{j,k}^{\nu} \}$ and \emph{samplets} $\mathbf{\Sigma}_{j}^{\nu} 
= \{ \sigma_{j,k}^{\nu} \}$ as linear combinations of the scaling 
distributions $\mathbf{\Phi}_{j+1}^{\nu}$ of $\nu$'s son clusters. 
This results in the \emph{refinement relation}
\begin{equation}\label{eq:refinementRelation}
   [ \mathbf{\Phi}_{j}^{\nu}, \mathbf{\Sigma}_{j}^{\nu} ] 
 \isdef 
 \mathbf{\Phi}_{j+1}^{\nu}
 {\bm Q}^{\nu}=
 \mathbf{\Phi}_{j+1}^{\nu}
 \big[ {\bm Q}_{j,\Phi}^{\nu},{\bm Q}_{j,\Sigma}^{\nu}\big].
\end{equation}

The transformation matrix \({\bm Q}_{j}^{\nu}\) is computed
from the QR decomposition
\begin{equation}\label{eq:QR} 
  ({\bm M}_{j+1}^{\nu})^\intercal  = {\bm Q}{\bm R}
  \defis\big[{\bm Q}_{j,\Phi}^{\nu} , {\bm Q}_{j,\Sigma}^{\nu}\big]{\bm R}
 \end{equation}
of the \emph{moment matrix}
\[
  {\bm M}_{j+1}^{\nu}\isdef
  \begin{bmatrix}({\bm x}^{\bm 0},\varphi_{j+1,1})_\Omega&\cdots&
  ({\bm x}^{\bm 0},\varphi_{j+1,|\nu|})_\Omega\\
  \vdots & & \vdots\\
  ({\bm x}^{\bm\alpha},\varphi_{j+1,1})_\Omega&\cdots&
  ({\bm x}^{\bm\alpha},\varphi_{j+1,|\nu|})_\Omega
  \end{bmatrix}=
  [({\bm x}^{\bm\alpha},\mathbf{\Phi}_{j+1}^{\nu})_\Omega]_{|\bm\alpha|\le q}
\in\Rbb^{m_q\times|\nu|}
\]
with
\begin{equation*}
m_q\isdef\sum_{\ell=0}^q{\ell+d-1\choose d-1}\leq(q+1)^d
\end{equation*}
being the dimension of \(\Pcal_q(\Omega)\).
There holds
\begin{equation}\label{eq:vanishingMomentsQR}
  \begin{aligned}
  \big[{\bm M}_{j,\Phi}^{\nu}, {\bm M}_{j,\Sigma}^{\nu}\big]
  &= \left[({\bm x}^{\bm\alpha},[\mathbf{\Phi}_{j}^{\nu},
  	\mathbf{\Sigma}_{j}^{\nu}])_\Omega\right]_{|\bm\alpha|\le q}
  = \left[({\bm x}^{\bm\alpha},\mathbf{\Phi}_{j+1}^{\nu}[{\bm Q}_{j,\Phi}^{\nu}
  , {\bm Q}_{j,\Sigma}^{\nu}])_\Omega
  	\right]_{|\bm\alpha|\le q} \\
  &= {\bm M}_{j+1}^{\nu} [{\bm Q}_{j,\Phi}^{\nu} , {\bm Q}_{j,\Sigma}^{\nu} ]
  = {\bm R}^\intercal.
  \end{aligned}
\end{equation}
As ${\bm R}^\intercal$ is a lower triangular matrix, the first $k-1$ 
entries in its $k$-th column are zero. This corresponds to 
$(k-1)$ vanishing moments for the $k$-th function generated 
by the transformation $[{\bm Q}_{j,\Phi}^{\nu} , {\bm Q}_{j,\Sigma}^{\nu} ]$. 
By defining the first $m_{q}$ functions as scaling distributions and 
the remaining as samplets, we obtain samplets with vanishing 
moments at least up to order $q+1$.

For leaf clusters, we define the scaling distributions by
the Dirac measures at the points \({\bm x}_i\), i.e.,
$\mathbf{\Phi}_J^{\nu}\isdef\{ \delta_{{\bm x}_i} : {\bm x}_i\in\nu \}$,
to make up for the lack of son clusters that could provide scaling distributions.
The scaling distributions of all clusters on a specific level $j$ 
then generate the spaces
\begin{equation}\label{eq:Vspaces}
	\Xcal_{j}\isdef \spn\{ \varphi_{j,k}^{\nu} : 
	k\in \Delta_j^\nu,\ \nu \in\Tcal_{j} \},
\end{equation}
while the samplets span the detail spaces
\begin{equation}\label{eq:Wspaces}
	\Scal_{j}\isdef
	\spn\{ \sigma_{j,k}^{\nu} : k\in \nabla_j^\nu,\ \nu \in \Tcal_{j} \} =
	\Xcal_{j+1}\overset{\perp}{\ominus}\Xcal_j.
\end{equation}
Combining the scaling distributions of the root cluster with all 
clusters' samplets amounts to the final samplet basis
\begin{equation}\label{eq:Wbasis}
  \mathbf{\Sigma}_{N}\isdef\mathbf{\Phi}_{0}^{X} 
  	\cup \bigcup_{\nu \in\Tcal} \mathbf{\Sigma}_{j_{\nu}}^{\nu}.
\end{equation}
A visualization of a scaling distribution and
different samplets on a spiral data set is found in Figure~\ref{fig:samplets}.

\begin{figure}[hbt]
\begin{center}
\includegraphics[trim=40 30 40 30,clip,width=0.32\textwidth]{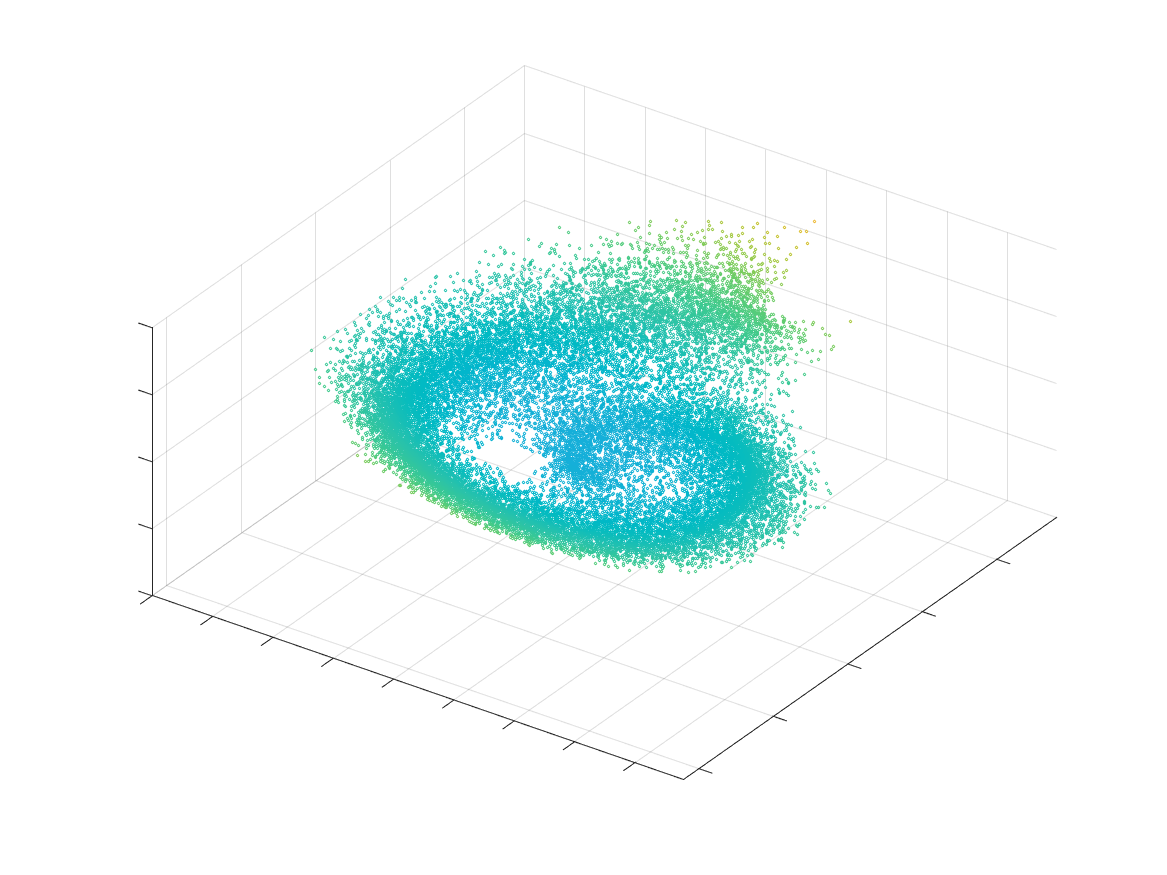}
\includegraphics[trim=40 30 40 30,clip,width=0.32\textwidth]{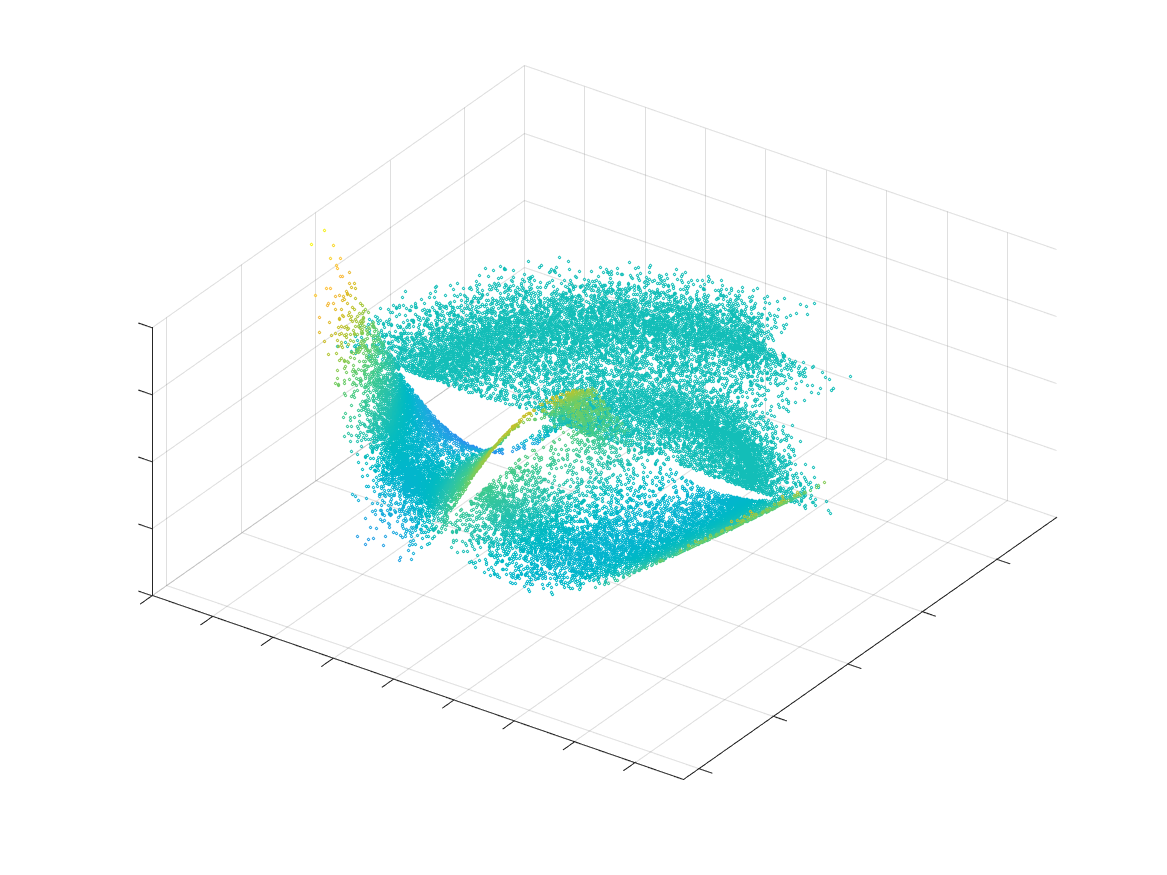}
\includegraphics[trim=40 30 40 30,clip,width=0.32\textwidth]{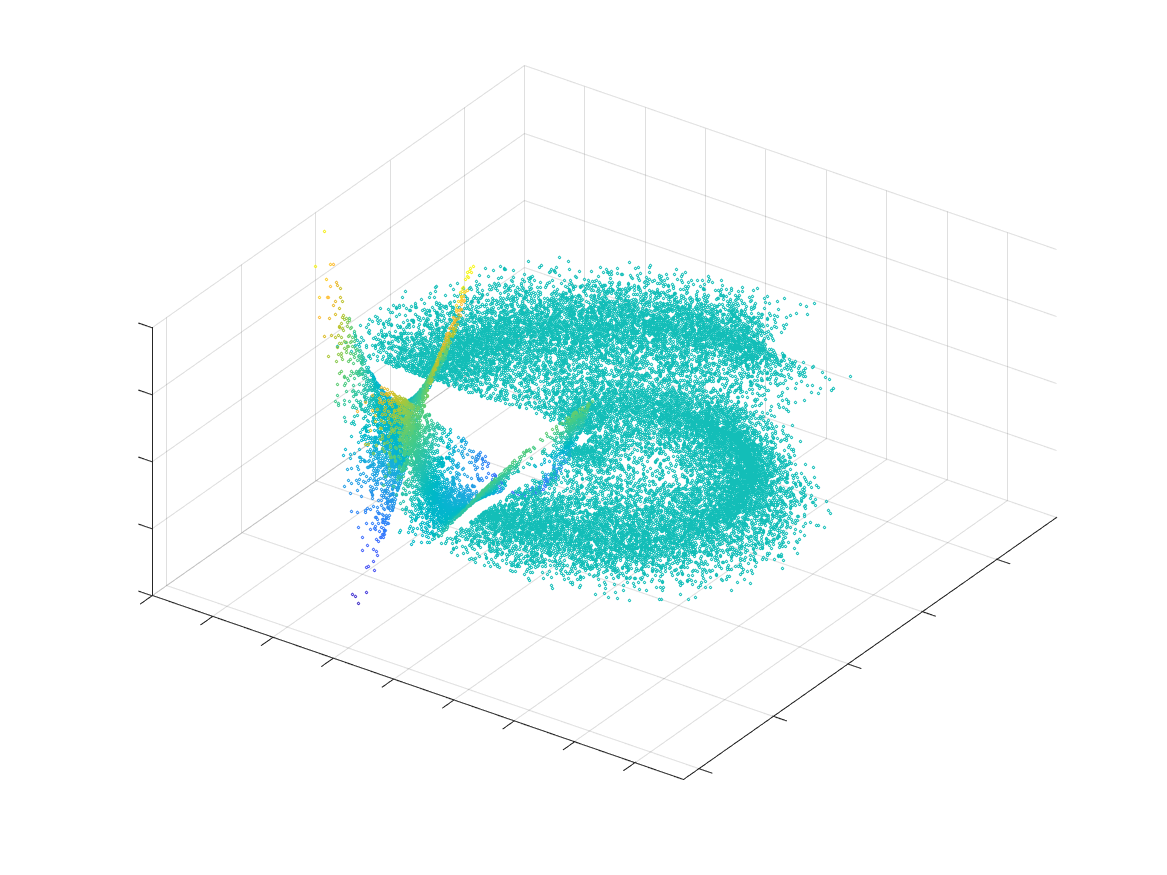}
\end{center}
\caption{\label{fig:samplets}
A scaling distribution on the coarsest scale (\emph{left}) and
samplets on level 2 and 3 (\emph{second from the left to right}).}
\end{figure}

By construction, samplets satisfy the following 
properties, which are collected from
\cite[Theorem 3.6, Lemma 3.9, Theorem 5.4]{HM21}.

\begin{theorem}\label{theo:waveletProperties}
The spaces $\Xcal_{j}$ defined in equation \eqref{eq:Vspaces} 
form the desired multiresolution analysis \eqref{eq:multiscale},
where the corresponding complement spaces $\Scal_{j}$ from 
\eqref{eq:Wspaces} satisfy 
\[
\Xcal_{j+1}=\Xcal_j\overset{\perp}{\oplus}\Scal_{j}\quad
\text{for all}\quad j=0,1,\ldots, J-1. 
\]
The associated samplet basis 
$\mathbf{\Sigma}_{N}$ defined in \eqref{eq:Wbasis} constitutes 
an orthonormal basis of $\Xcal$ and we have:
\begin{enumerate}
\item 
The number of all samplets on level $j$ behaves like $2^j$.
\item 
The samplets have vanishing moments of order $q+1$, 
i.e., there holds \eqref{eq:vanishingMoments}.
\item 
Each samplet is supported in a specific cluster $\nu$. 
If the points in $X$ are quasi-uniform, then the 
diameter of the cluster satisfies $\diam(\nu)\sim 
2^{-j_\nu/d}$ and there holds \eqref{eq:locality}.
\item 
The coefficient vector 
${\bm\omega}_{j,k}=\big[\omega_{j,k,i}\big]_i$ 
of the samplet $\sigma_{j,k}$ on the cluster $\nu$ fulfills
\begin{equation*}
  \|{\bm\omega}_{j,k}\|_{1}\le\sqrt{|\nu|}.
\end{equation*}
\item Let $f\in C^{q+1}(\Omega)$. Then, there holds 
for a samplet $\sigma_{j,k}$ supported on the cluster $\nu$ that
\begin{equation*}
 |(f,\sigma_{j,k})_\Omega|\le \bigg(\frac{d}{2}\bigg)^{q+1}
  	\frac{\diam(\nu)^{q+1}}{(q+1)!}\|f\|_{C^{q+1}(\Omega)}
	\|{\bm\omega}_{j,k}\|_{1}.
\end{equation*}
\end{enumerate}
\end{theorem}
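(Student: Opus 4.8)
The plan is to treat the five assertions separately, since items~(1)--(3) are essentially bookkeeping consequences of the two-scale construction \eqref{eq:refinementRelation}--\eqref{eq:Wbasis} combined with the quasi-uniformity of $X$, while items~(4) and~(5) are the only genuinely quantitative claims. First I would record that, by construction, a samplet $\sigma_{j,k}=\sigma_{j,k}^\nu$ is a linear combination of the leaf scaling distributions $\{\delta_{{\bm x}_i}:{\bm x}_i\in\nu\}$ of a single cluster $\nu\in\Tcal_j$, whence $\supp\sigma_{j,k}\subset\nu$; together with the diameter bound $\diam(\nu)\sim2^{-j_\nu/d}$ from the preceding lemma this already gives item~(3), and, using $|\Tcal_j|\sim2^j$ for the balanced tree, also item~(1) once one observes that each inner cluster on level $j$ turns only $\mathcal O(1)$ of the at most $2m_q$ scaling distributions inherited from its two sons into samplets.

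Next I would establish that the whole construction is $\langle\cdot,\cdot\rangle_\Xcal$-orthonormal, by induction from the leaves towards the root: at a leaf the Dirac measures are orthonormal by \eqref{eq:innerp}; at an inner cluster $\mathbf{\Phi}_{j+1}^\nu$ is the union of the two sons' scaling distributions, which are orthonormal among themselves and have disjoint supports, and passing to $[\mathbf{\Phi}_j^\nu,\mathbf{\Sigma}_j^\nu]=\mathbf{\Phi}_{j+1}^\nu{\bm Q}^\nu$ with ${\bm Q}^\nu$ orthogonal from the QR decomposition \eqref{eq:QR} preserves orthonormality. This yields the orthonormal basis property of $\mathbf{\Sigma}_N$ and, in particular, $\|{\bm\omega}_{j,k}\|_2=1$; since ${\bm\omega}_{j,k}$ is supported on the $|\nu|$ points of $\nu$, the Cauchy--Schwarz inequality then gives $\|{\bm\omega}_{j,k}\|_1\le\sqrt{|\nu|}\,\|{\bm\omega}_{j,k}\|_2=\sqrt{|\nu|}$, which is item~(4). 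Item~(2) I would read off \eqref{eq:vanishingMomentsQR}: the polynomial moments of the functions generated on $\nu$ are the columns of the lower triangular $m_q\times|\nu|$ matrix ${\bm R}^\intercal$, whose rows are indexed by the monomials ${\bm x}^{\bm\alpha}$, $|{\bm\alpha}|\le q$, so every column with index $k>m_q$ lies strictly below the diagonal and hence vanishes; declaring exactly those functions to be the samplets enforces $({\bm x}^{\bm\alpha},\sigma_{j,k})_\Omega=0$ for all $|{\bm\alpha}|\le q$, i.e.\ \eqref{eq:vanishingMoments}.

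The substantive step is item~(5). Using the vanishing moments from~(2) I would replace $f$ by $f-p$, where $p\in\Pcal_q(\Omega)$ is the degree-$q$ Taylor polynomial of $f$ about the centre ${\bm x}_0$ of the bounding box $B_\nu$ of $\nu$; this leaves $(f,\sigma_{j,k})_\Omega$ unchanged. For ${\bm x}\in\nu$ one has $\|{\bm x}-{\bm x}_0\|_\infty\le\tfrac12\diam(\nu)$, because each side length of $B_\nu$ is at most the Euclidean diameter of $\nu$, and the Taylor remainder estimate gives
\[
|f({\bm x})-p({\bm x})|\le\sum_{|{\bm\alpha}|=q+1}\frac{|({\bm x}-{\bm x}_0)^{\bm\alpha}|}{{\bm\alpha}!}\,\|\partial^{\bm\alpha}f\|_{L^\infty(\Omega)}\le\Big(\frac{\diam(\nu)}{2}\Big)^{q+1}\frac{d^{q+1}}{(q+1)!}\,\|f\|_{C^{q+1}(\Omega)},
\]
where the last inequality uses the multinomial identity $\sum_{|{\bm\alpha}|=q+1}(q+1)!/{\bm\alpha}!=d^{q+1}$. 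Writing $(f,\sigma_{j,k})_\Omega=\sum_{{\bm x}_i\in\nu}\omega_{j,k,i}(f-p)({\bm x}_i)$ and bounding it by $\|{\bm\omega}_{j,k}\|_1\max_{{\bm x}_i\in\nu}|(f-p)({\bm x}_i)|$ then yields the claimed bound. The main obstacle is purely one of constants: the factor $(d/2)^{q+1}$ (rather than $d^{q+1}$) forces the expansion point to satisfy $\|{\bm x}-{\bm x}_0\|_\infty\le\tfrac12\diam(\nu)$ on $\nu$, which is why the bounding-box centre is used; one also needs the Taylor remainder to be controlled by the $C^{q+1}(\Omega)$-norm, which is unproblematic given the smallness of $\diam(\nu)$ (e.g.\ by working on a convex neighbourhood of $\nu$ or extending $f$).
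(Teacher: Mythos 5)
The paper itself does not prove this theorem; it imports it verbatim from \cite{HM21} (Theorem~3.6, Lemma~3.9, Theorem~5.4). Your reconstruction is correct and is exactly the argument one expects: orthogonality of ${\bm Q}^\nu$ from \eqref{eq:QR} transports the $\langle\cdot,\cdot\rangle_\Xcal$-orthonormality of $\mathbf{\Phi}_{j+1}^\nu$ to $[\mathbf{\Phi}_j^\nu,\mathbf{\Sigma}_j^\nu]$; the lower-triangular structure of ${\bm R}^\intercal$ in \eqref{eq:vanishingMomentsQR} and the cut at column $m_q$ give the vanishing moments; Cauchy--Schwarz on the $|\nu|$-sparse unit coefficient vector gives item~(4); and the Taylor remainder about the bounding-box centre with the multinomial identity $\sum_{|{\bm\alpha}|=q+1}(q+1)!/{\bm\alpha}!=d^{q+1}$ reproduces the constant $(d/2)^{q+1}$ in item~(5).

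Two small points worth tightening. First, the leaf-to-root induction as you phrase it shows orthonormality of $[\mathbf{\Phi}_j^\nu,\mathbf{\Sigma}_j^\nu]$ within a fixed cluster and across siblings (disjoint supports), but orthogonality of samplets across \emph{levels} deserves one more sentence: each QR step replaces the scaling distributions $\mathbf{\Phi}_{j+1}$ (a subset of an orthonormal basis of $\Xcal$) by the orthonormal set $\mathbf{\Phi}_j\cup\mathbf{\Sigma}_j$ spanning the same subspace, so that at every stage one has an orthonormal basis of $\Xcal$ containing all samplets produced so far; unwinding to the root gives $\mathbf{\Sigma}_N$. Equivalently, $\Sigma_j^\nu\perp\Phi_j$ together with $\Scal_{j'}\subset\Xcal_{j'+1}\subset\Xcal_j$ for $j'<j$ gives $\Scal_j\perp\Scal_{j'}$. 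Second, the Taylor-remainder estimate requires the segment from the bounding-box centre ${\bm x}_0$ to each ${\bm x}_i\in\nu$ to lie where $f$ has $q+1$ derivatives; you flag this correctly, and it is harmless since one may work with $f$ extended to a convex neighbourhood of $B_\nu$, which is the implicit convention behind the constant in \eqref{eq:kernel_decay} as well.
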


\begin{remark}
Each samplet is a linear combination of the Dirac 
measures supported at the points in $X$. The related 
coefficient vectors ${\bm\omega}_{j,k}$ in
\begin{equation}\label{eq:coefficientVectorsOfWavelets}
  \sigma_{j,k} = \sum_{i=1}^{N}
  \omega_{j,k,i} \delta_{{\bm x}_i}
\end{equation}
are pairwise orthonormal with respect to the inner
product \eqref{eq:innerp}. 
The \emph{dual samplet} in \(\Hcal_X\) is given by
\[
\tilde{\sigma}_{j,k}=\sum_{i=1}^N\tilde{\omega}_{j,k,i}\kernel({\bm x}_i,\cdot),
\quad\text{where}\quad\tilde{\bm\omega}_{j,k}\isdef{\bm K}^{-1}{\bm\omega}_{j,k},
\]
as there holds
\begin{align*}
\langle\tilde{\sigma}_{j,k},\hat{\sigma}_{j'k'}\rangle_{\Hcal}=
(\tilde{\sigma}_{j,k},\sigma_{j'k'})_\Omega
&=\sum_{i,i'=1}^N\tilde{\omega}_{j,k,i}{\omega}_{j',k',i'}
\big(\kernel({\bm x}_i,\cdot),\delta_{{\bm x}_{i'}}\big)_\Omega\\
&=\tilde{\bm\omega}_{j,k}^\intercal{\bm K}{\bm\omega}_{j',k'}
=\delta_{(j,k),(j',k')}.
\end{align*}
\end{remark}

\subsection{Matrix compression}
For the compression of the kernel matrix \({\bm K}\) from
\eqref{eq:KernelMatrix}, with samplets of vanishing moment order
$q+1$, with integer $q\geq 0$,
we suppose that kernel \(\kernel\) is ``$q+1$-\emph{asymptotically smooth}''.
This is to say that
there are constants $c_{\kernel,\bm\alpha,\bm\beta}>0$
such that for all ${\bm x},{\bm y}\in \Omega$ 
with ${\bm x}\ne {\bm y}$ there holds
\begin{equation}\label{eq:kernel_estimate}
  \bigg|\frac{\partial^{|\bm\alpha|+|\bm\beta|}}
  	{\partial{\bm x}^{\bm\alpha} \partial{\bm y}^{\bm\beta}}
	\kernel({\bm x},{\bm y})\bigg|
  		\le c_{\kernel,\bm\alpha,\bm\beta}
		\|{\bm x}-{\bm y}\|_2^{-(|\bm\alpha|+|\bm\beta|)} 
\quad\text{for all } |\bm\alpha|, |\bm\beta| \leq q+1 
\;.
\end{equation}

Note that such an estimate can only be valid for continuous 
kernels as considered here, but not for singular kernels. 
However,
we observe in passing that this condition is considerably weaker than
the notion of asymptotic smoothness of kernels in ${\mathcal H}$-matrix 
theory, cp.\ \cite{Hackbusch}. 
The condition there would correspond to 
infinite differentiability in \eqref{eq:kernel_estimate} 
with analytic estimates on the constants $c_{\kernel,\bm\alpha,\bm\beta}$.

Due to \eqref{eq:kernel_estimate}, we have in 
accordance with \cite[Lemma 5.3]{HM21} the decay estimate
\begin{equation}\label{eq:kernel_decay}
  (\kernel,\sigma_{j,k}\otimes\sigma_{j',k'})_{\Omega\times\Omega}\le
  	c_{\kernel,q}\frac{\diam(\nu)^{q+1}\diam(\nu')^{q+1}}
		{\dist(\nu_{j,k},\nu_{j',k'})^{2(q+1)}}
		\|{\bm\omega}_{j,k}\|_{1}\|{\bm\omega}_{j',k'}\|_{1}
\end{equation}
for two samplets $\sigma_{j,k}$ and $\sigma_{j',k'}$,
with the vanishing moment property of order $q+1$ and
supported on the
clusters \(\nu\) and \(\nu'\) such that $\dist(\nu,\nu') > 0$.

Estimate \eqref{eq:kernel_decay} holds for a wide range of 
kernels that obey the so-called \emph{Calder\'on-Zygmund estimates}.
It immediately results in 
the following compression strategy for kernel matrices in
samplet representation, cp.\ \cite[Theorem 5.4]{HM21},
which is well-known in the context of wavelet compression
of operator equations see, e.g., \cite{Meyer}.
\begin{theorem}[$S$-compression]\label{thm:compression}
Set all coefficients of the kernel matrix
\[
{\bm K}^\Sigma\isdef\big[(\kernel,\sigma_{j,k}
\otimes\sigma_{j',k'})_{\Omega\times\Omega}
\big]_{j,j',k,k'}
\]
to zero which satisfy the \emph{$\eta$-admissibility condition}
\begin{equation}\label{eq:cutoff}
   \dist(\nu,\nu')\ge\eta\max\{\diam(\nu),\diam(\nu')\},\quad\eta>0,
\end{equation}
where \(\nu\) is the cluster supporting \(\sigma_{j,k}\) and
\(\nu'\) is the cluster supporting \(\sigma_{j',k'}\), respectively.

Then, the resulting \emph{S-compressed} matrix \({\bm K}^\eta\)
satisfies
\[
  \big\|{\bm K}^\Sigma-{\bm K}^\eta\big\|_F
	\le c {\eta^{-2(q+1)}} 
		N\sqrt{\log(N)}.
\]
for some constant \(c>0\) dependent on the polynomial
degree \(q\) and the kernel \(\kappa\).
\end{theorem}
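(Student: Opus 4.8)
The plan is to bound the Frobenius norm of the discarded part of ${\bm K}^\Sigma$ by summing the squares of the dropped entries, organized by the pair of levels $(j,j')$ to which the two samplets belong. For a fixed admissible pair $\sigma_{j,k}$, $\sigma_{j',k'}$ supported on clusters $\nu,\nu'$, I would start from the decay estimate \eqref{eq:kernel_decay} together with property (4) of Theorem~\ref{theo:waveletProperties}, namely $\|{\bm\omega}_{j,k}\|_1 \le \sqrt{|\nu|} \sim 2^{(J-j)/2}$, and the locality estimate $\diam(\nu)\sim 2^{-j/d}$. Substituting these into \eqref{eq:kernel_decay} and using the $\eta$-admissibility condition \eqref{eq:cutoff} to replace $\dist(\nu,\nu')$ from below by $\eta\max\{\diam(\nu),\diam(\nu')\}$, each dropped entry is controlled by a quantity of the form $c\,\eta^{-2(q+1)}\, 2^{-(j+j')(q+1)/d}\,\min\{2^{-j/d},2^{-j'/d}\}^{-2(q+1)}\cdot 2^{(J-j)/2}2^{(J-j')/2}$, roughly. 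The key structural input is that, because the samplets are localized, for a \emph{fixed} $\sigma_{j,k}$ the number of samplets $\sigma_{j',k'}$ on level $j'$ whose cluster $\nu'$ lies within a fixed distance-to-diameter ratio but is \emph{not} $\eta$-admissible, i.e.\ for which $\dist(\nu,\nu')\lesssim\eta\,\diam(\nu')$ (the ``near field''), is bounded by $C\eta^d 2^{(j'-j)_+}$ up to constants --- this is the standard finite-overlap / shell-counting argument for quasi-uniform data.

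Next I would carry out the double sum. For each of the $\mathcal{O}(2^j)$ samplets on level $j$ (property (1)) I sum the squared dropped entries over all $j'$ and over the near-field clusters on level $j'$; geometric series in $|j-j'|$ collapse because the product of the diameter powers beats the $2^{(J-j)/2}2^{(J-j')/2}$ growth precisely when $q\ge 0$ and the data is quasi-uniform (so $2^J\sim N$). Carefully, one finds that the row-sum over $j'$ and $k'$ of squared entries attached to a fixed $(j,k)$ is $\mathcal{O}(\eta^{-4(q+1)} 2^{2J} 2^{-2j} )$ type; summing $2^j$ of these over $j=0,\dots,J$ gives a sum dominated by its coarsest levels but with $J+1\sim\log N$ levels contributing comparably, which produces the $N^2\log N$ inside the square root and hence the stated $N\sqrt{\log N}$ after taking the square root. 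The $\eta^{-2(q+1)}$ prefactor survives the square root of $\eta^{-4(q+1)}$. I would also invoke (2) to know that the vanishing-moment order used in \eqref{eq:kernel_decay} is exactly $q+1$, and (3) for the diameter–level relation used throughout; (5) is not needed here since \eqref{eq:kernel_decay} already packages the relevant bound.

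The main obstacle is the bookkeeping of the double geometric sum over levels: one must verify that the exponents of $2$ assemble so that the series in $|j-j'|$ converges (uniformly in $q\ge 0$, with the $\eta$ dependence isolated) and that the remaining sum over the common level index contributes only the logarithmic factor rather than an extra power of $N$. This is delicate because the samplet $\ell^1$-norm bound $\sqrt{|\nu|}$ is quite pessimistic and one is relying on the mismatch between the polynomial-in-$\diam$ decay and the exponential-in-level growth of $\sqrt{|\nu|}$; getting the threshold right (it is exactly $q+1 \ge 1$ derivatives, i.e.\ $q\ge 0$) is where the argument is tight. A secondary technical point is confirming the near-field cardinality bound $\#\{k' : \sigma_{j',k'}\ \text{not }\eta\text{-admissible w.r.t. }\nu\}\lesssim \max\{1,\eta^d\}\,2^{\max\{j,j'\}-j}$ uniformly, which is where quasi-uniformity and the balancedness of the cluster tree enter; once that is in hand the rest is the summation just described. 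This is precisely the argument of \cite[Theorem~5.4]{HM21}, so I would present it compactly, emphasizing the two estimates \eqref{eq:kernel_decay} and \eqref{eq:cutoff} as the inputs and the level-sum as the computation.
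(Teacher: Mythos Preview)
Your proposal is correct and follows the standard argument. Note that the paper itself does not supply a proof of this theorem but simply imports it from \cite[Theorem~5.4]{HM21}; your sketch is precisely that argument---combining the decay estimate \eqref{eq:kernel_decay}, the admissibility cutoff \eqref{eq:cutoff}, the cluster properties from Theorem~\ref{theo:waveletProperties}, and the level-wise double summation---so there is nothing to compare.
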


\begin{remark}
We remark that Theorem~\ref{thm:compression} 
uses the Frobenius norm for measuring the error rather than the 
operator norm, as it gives control on each matrix coefficient. 
Estimates with respect to the operator norm would be similar.

The $\eta$-admissibility condition \eqref{eq:cutoff} 
appears reminiscent to the one used for hierarchical matrices, 
compare, e.g., \cite{Boe10} and the references there.
However, in the
present context, the clusters \(\nu\) and \(\nu'\) may also be
located on different levels, i.e., \(j_\nu\neq j_{\nu'}\) in general.
As a consequence, 
the resulting block cluster tree is the cartesian product
\(\Tcal\times\Tcal\) rather than the level-wise cartesian product
considered in the context of hierarchical matrices.

The error bounds for $S$-compression hold for kernel functions $\kappa$ with
finite differentiability (especially, with derivatives of order $q+1$, cp.\
\cite[Lemma 5.3]{HM21}), 
as 
opposed to the usual requirement of asymptotic smoothness which appears in 
the error analysis of the $\Hcal$-format, see \cite{Hackbusch}
and the references therein.
\end{remark}

For point sets $X = \{{\bm x}_i\}_{i=1}^N$ 
that are quasi-uniform in the sense of 
Definition~\ref{def:quasiunif}, 
there holds
\[
 \frac{1}{N^2}\big\|{\bm K}^\Sigma\big\|_F^2 = 
 \frac{1}{N^2}\sum_{i=1}^N\sum_{j=1}^N |\kernel({\bm x}_i,{\bm x}_j)|^2 
 \sim \int_\Omega\int_\Omega|\kernel({\bm x},{\bm y})|^2\d{\bm x}\d{\bm y},
\]
i.e., $\big\|{\bm K}^\Sigma\big\|_F\sim N$. 
Thus, we can refine the above result, see also \cite[Corollary 5.5]{HM21}. 
\begin{corollary}
In case of quasi-uniform points ${\bm x}_i\in X$, the $S$-compressed 
matrix ${\bm K}^\eta$ has only $\mathcal{O}(N\log N)$ 
nonzero coefficients, while it satisfies the error estimate
\begin{equation}\label{eq:relerr}
  \frac{\big\|{\bm K}^\Sigma-{\bm K}^\eta\big\|_F}
  {\big\|{\bm K}^\Sigma\big\|_F} \le c \eta^{-2(q+1)} \sqrt{\log N}.
\end{equation}
\end{corollary}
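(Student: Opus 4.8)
The plan is to combine the two parts of the corollary --- the sparsity count and the relative error bound --- both of which follow by specializing Theorem~\ref{thm:compression} to quasi-uniform point sets. First I would address the nonzero count. For a cluster $\nu$ on level $j_\nu$, the $\eta$-admissibility condition \eqref{eq:cutoff} fails only for clusters $\nu'$ with $\dist(\nu,\nu') < \eta\max\{\diam(\nu),\diam(\nu')\}$. Using the locality property (3) of Theorem~\ref{theo:waveletProperties}, namely $\diam(\nu)\sim 2^{-j_\nu/d}$, one checks that on each fixed level $j'$ the number of clusters $\nu'$ that are \emph{not} $\eta$-admissible with a given $\nu$ is bounded by a constant depending only on $\eta$, $d$, and the quasi-uniformity constant $c(X,\Omega)$ when $j'\le j_\nu$, and grows like $2^{(j'-j_\nu)}$ once $j'>j_\nu$ (since then the smaller clusters $\nu'$ pack into a neighborhood of $\nu$ of fixed diameter $\sim\diam(\nu)$). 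Summing the contributions $\min\{1,2^{j'-j_\nu}\}$ geometrically over all levels $j'=0,\dots,J$ gives $\mathcal{O}(J)$ relevant partners per cluster; since the total number of samplets is $\sim N$ and $J\sim\log N$ by balancedness, the total number of retained entries is $\mathcal{O}(N\log N)$.

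Next I would turn to the relative error. Theorem~\ref{thm:compression} already supplies the absolute bound $\|{\bm K}^\Sigma-{\bm K}^\eta\|_F \le c\,\eta^{-2(q+1)} N\sqrt{\log N}$. It therefore suffices to show $\|{\bm K}^\Sigma\|_F\sim N$ for quasi-uniform points and then divide. For the lower bound $\|{\bm K}^\Sigma\|_F\gtrsim N$ I would note that the samplet transform is orthogonal, so $\|{\bm K}^\Sigma\|_F = \|{\bm K}\|_F$, and $\|{\bm K}\|_F^2 = \sum_{i,j}|\kernel({\bm x}_i,{\bm x}_j)|^2 \ge \sum_i \kernel({\bm x}_i,{\bm x}_i)^2 = N\kernel(0)^2$ (the diagonal alone already contributes $\sim N$, using that $\kernel$ is continuous and, for the Mat\'ern-type kernels under consideration, constant on the diagonal; in general $\kernel({\bm x}_i,{\bm x}_i)$ is bounded below by positive definiteness together with continuity on the compact $\overline\Omega$). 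For the upper bound, quasi-uniformity lets one interpret $\frac1{N^2}\|{\bm K}\|_F^2 = \frac1{N^2}\sum_{i,j}|\kernel({\bm x}_i,{\bm x}_j)|^2$ as a product midpoint-type quadrature of the continuous, hence bounded, integrand $|\kernel({\bm x},{\bm y})|^2$ over $\Omega\times\Omega$, which converges to $\int_\Omega\int_\Omega|\kernel({\bm x},{\bm y})|^2\,\d{\bm x}\,\d{\bm y} < \infty$; thus $\|{\bm K}\|_F\lesssim N$. Combining the two gives $\|{\bm K}^\Sigma\|_F\sim N$, and dividing the absolute estimate by it yields \eqref{eq:relerr}.

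The main obstacle I anticipate is making the per-cluster partner count rigorous across levels in the $\eta$-admissibility bookkeeping: unlike in the $\Hcal$-matrix setting, the block cluster tree here is the full Cartesian product $\Tcal\times\Tcal$, so one must control interactions of clusters living on genuinely different levels $j_\nu\ne j_{\nu'}$. The key quantitative input is that a cluster $\nu'$ on a finer level $j'>j_\nu$ is non-admissible with $\nu$ only if it lies within distance $\eta\diam(\nu)$ of $\nu$, i.e.\ inside a fixed ball of radius $\sim(1+\eta)\diam(\nu)\sim 2^{-j_\nu/d}$; by the volume-counting lemma (the ratio $|B_\nu|/|\Omega|\sim|B_\nu\cap X|/N$) at most $\mathcal{O}(2^{j'-j_\nu})$ such finer clusters fit, and symmetrically the coarse-level count is $\mathcal{O}(1)$. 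The geometric series $\sum_{j'\le j_\nu}\mathcal{O}(1) + \sum_{j'>j_\nu}\mathcal{O}(2^{j'-j_\nu})$ is dominated, once truncated at depth $J$, by the $\mathcal{O}(J)=\mathcal{O}(\log N)$ term from the coarse side, and this is exactly where the logarithmic factor in both assertions originates. Everything else is a bookkeeping exercise already implicit in \cite[Corollary~5.5]{HM21} and the cited properties of balanced cluster trees.
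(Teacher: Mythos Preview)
Your treatment of the relative error bound mirrors the paper's approach: establish $\|{\bm K}^\Sigma\|_F\sim N$ from the quasi-uniform quadrature interpretation of $\frac{1}{N^2}\sum_{i,j}|\kernel({\bm x}_i,{\bm x}_j)|^2$ and divide the absolute bound of Theorem~\ref{thm:compression} by it. The paper does not spell out the lower bound $\|{\bm K}\|_F\gtrsim N$ via the diagonal as you do, but this is a harmless and welcome addition.

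Your counting argument for the $\mathcal{O}(N\log N)$ nonzero coefficients, however, contains a genuine error. You claim that for a fixed cluster $\nu$ on level $j_\nu$, summing $\min\{1,2^{j'-j_\nu}\}$ over $j'=0,\dots,J$ yields $\mathcal{O}(J)$ non-admissible partners, and in the final paragraph you assert that this sum ``is dominated \dots\ by the $\mathcal{O}(J)=\mathcal{O}(\log N)$ term from the coarse side.'' This is false: the fine-side contribution is $\sum_{j'>j_\nu}2^{j'-j_\nu}\sim 2^{J-j_\nu}$, which is of order $N$, not $\log N$, when $j_\nu$ is small. A coarse cluster (e.g.\ the root) is non-admissible with essentially every fine-level cluster, so its partner count is $\Theta(N)$. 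The per-cluster bound you state simply does not hold uniformly.

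The repair is to abandon the uniform per-cluster count and instead count non-admissible \emph{pairs}: on a level pair $(j,j')$ there are $\mathcal{O}(2^{\max\{j,j'\}})$ such pairs (anchor at the finer cluster, which sees $\mathcal{O}(1)$ non-admissible partners on each coarser level), and $\sum_{j,j'=0}^J 2^{\max\{j,j'\}}=\mathcal{O}(J\,2^J)=\mathcal{O}(N\log N)$. Equivalently, charge each pair to its finer member; then each cluster is billed only for its $\mathcal{O}(j_\nu)\le\mathcal{O}(J)$ coarser partners, and summing over all $\sim N$ clusters gives the claim. The paper itself does not reproduce this computation but simply refers to \cite[Corollary~5.5]{HM21}.
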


In \cite{HM21}, an algorithm has been proposed which
computes the compressed matrix ${\bm K}^\eta$ 
in work and memory \(\mathcal{O}(N\log N)\). 
The key ingredient to achieve
this is the use of an interpolation-based fast multipole method
and $\Hcal^2$-matrix techniques \cite{Boe10,GR,AHK14}.
\section{Samplet matrix algebra}\label{sec:arithmetic}
\subsection{Addition and multiplication}
\label{sec:AddMlt}
To bound the cost for the addition of 
two compressed kernel matrices represented with
respect to the same cluster tree, it is sufficient
to assume that the points in $X$ are quasi-uniform.
Then it is straightforward to see that the cost
for adding such matrices is \(\mathcal{O}({N\log N})\).
The multiplication of two compressed matrices, in turn,
is motivated by
concatenation $\Ccal=\Acal\circ\Bcal$ of the two
pseudodifferential 
operators $\Acal$ and $\Bcal$. 
In suitable algebras, the product
\(\Ccal\) is again a pseudodifferential operator
and, hence, compressible.
The respective kernel 
$\kernel_{\Ccal}(\cdot,\cdot)$
is given by
\begin{equation}\label{eq:kernel_C}
 \kernel_{\Ccal}({\bm x},{\bm y}) 
= \int_\Omega\kernel_{\Acal}({\bm x},{\bm z})
 \kernel_{\Bcal}({\bm z},{\bm y})\d {\bm z}.
\end{equation}

Since
\(\Omega\subset\Rbb^d\) is bounded by assumption, 
we may without loss of generality assume \(\Omega\subset[0,1)^d\).
Then, if the distribution of the
data points in $X=\{\bm{x}_i\}_{i=1}^N\subset\Omega$ satisfies the
stronger assumption of being \emph{asymptotically uniform modulo one}, 
then there holds
\begin{equation}\label{eq:unifmod1}
\lim_{N\to\infty}\frac{|\Omega|}{N}\sum_{i=1}^N(f,\delta_{{\bm x}_i})_\Omega=
\int_\Omega f({\bm x})\d{\bm x}
\end{equation}
for every Riemann integrable function \(f\colon\Omega\to\Rbb\), cp.\ \cite{LP10}.
Hence, we may interpret the matrix product as a discrete version of the
convolution \eqref{eq:kernel_C}. In view of \eqref{eq:unifmod1}, we conclude
\begin{equation}\label{eq:integral}
 \bigg|\kernel_{\Ccal}({\bm x},{\bm y})
-\frac{|\Omega|}{N}\sum_{k=1}^N
  \kernel_{\Acal}({\bm x},{\bm x}_k) 
   \kernel_{\Bcal}({\bm x}_k,{\bm y})\bigg|\to 0\ \text{as $N\to\infty$}.
\end{equation}
Consequently, the product of two kernel matrices 
\[
\bm{K}_{\Acal} = [\kernel_{\Acal}(\bm{x}_i,\bm{x}_j)]_{i,j=1}^N,\quad
\bm{K}_{\Bcal} = [\kernel_{\Bcal}(\bm{x}_i,\bm{x}_j)]_{i,j=1}^N
\] 
yields an $S$-compressible matrix \(\bm{K}_{\Acal}\cdot
\bm{K}_{\Bcal}\in\Rbb^{N\times N}\). 

\begin{theorem}
Let $X=\{\bm{x}_i\}_{i=1}^N \subset\Omega$
be asymptotically distributed
uniformly modulo one, cp.\ \eqref{eq:unifmod1}, and 
denote by $\bm{K}_{\Ccal}$ the corresponding kernel matrix
\[
\bm{K}_{\Ccal} = \frac{N}{|\Omega|}
[\kernel_{\Ccal}(\bm{x}_i,\bm{x}_j)]_{i,j=1}^N
\]
with $\kernel_{\Ccal}(\cdot,\cdot)$ from \eqref{eq:kernel_C}.
Then, there holds 
\[
  \frac{\|\bm{K}_{\Ccal}-\bm{K}_{\Acal}\bm{K}_{\Bcal}\|_F}{
  \|\bm{K}_{\Ccal}\|_F} \to 0
\ \text{as $N\to\infty$}.
\]
\end{theorem}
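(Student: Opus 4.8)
The plan is to interpret $\bm K_{\Acal}\bm K_{\Bcal}$ entrywise as a Riemann sum approximating the defining integral \eqref{eq:kernel_C} and to quantify the resulting pointwise error via \eqref{eq:integral}, then to pass from pointwise control to the Frobenius norm. Concretely, the $(i,j)$-entry of $\bm K_{\Acal}\bm K_{\Bcal}$ is $\sum_{k=1}^N \kernel_{\Acal}(\bm x_i,\bm x_k)\kernel_{\Bcal}(\bm x_k,\bm x_j)$, while the $(i,j)$-entry of $\bm K_{\Ccal}$ is $\frac{N}{|\Omega|}\kernel_{\Ccal}(\bm x_i,\bm x_j) = \frac{N}{|\Omega|}\int_\Omega \kernel_{\Acal}(\bm x_i,\bm z)\kernel_{\Bcal}(\bm z,\bm x_j)\,\d\bm z$. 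Thus
\[
\big(\bm K_{\Ccal}-\bm K_{\Acal}\bm K_{\Bcal}\big)_{i,j}
= \frac{N}{|\Omega|}\bigg(\kernel_{\Ccal}(\bm x_i,\bm x_j) - \frac{|\Omega|}{N}\sum_{k=1}^N \kernel_{\Acal}(\bm x_i,\bm x_k)\kernel_{\Bcal}(\bm x_k,\bm x_j)\bigg),
\]
so that $\|\bm K_{\Ccal}-\bm K_{\Acal}\bm K_{\Bcal}\|_F^2 = \frac{N^2}{|\Omega|^2}\sum_{i,j=1}^N \varepsilon_{i,j}(N)^2$, where $\varepsilon_{i,j}(N)$ is the quantity estimated in \eqref{eq:integral}. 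For the denominator, the argument already given before the theorem statement shows $\|\bm K_{\Ccal}\|_F \sim \frac{N}{|\Omega|}\cdot N \sim N^2$ (since $\bm K_{\Ccal}$ has the extra prefactor $N/|\Omega|$, and $\frac{1}{N^2}\sum_{i,j}|\kernel_{\Ccal}(\bm x_i,\bm x_j)|^2 \to \int_\Omega\int_\Omega |\kernel_{\Ccal}|^2$). Hence the claim reduces to showing $\frac{1}{N^2}\sum_{i,j=1}^N \varepsilon_{i,j}(N)^2 \to 0$, i.e.\ that the averaged squared quadrature error tends to zero.

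The main step is therefore to upgrade the pointwise convergence $\varepsilon_{i,j}(N)\to 0$ of \eqref{eq:integral} to convergence of the empirical average $\frac{1}{N^2}\sum_{i,j}\varepsilon_{i,j}(N)^2\to 0$. I would do this by a uniform integrability / equicontinuity argument: for fixed $(\bm x,\bm y)$, $g_{\bm x,\bm y}(\bm z)\isdef \kernel_{\Acal}(\bm x,\bm z)\kernel_{\Bcal}(\bm z,\bm y)$ is Riemann integrable, and \eqref{eq:unifmod1} applied to it gives $\varepsilon_{\bm x,\bm y}(N)\to 0$; since $\kernel_{\Acal},\kernel_{\Bcal}$ are continuous on the compact $\overline\Omega\times\overline\Omega$ (being reproducing-type kernels of the class under consideration), the family $\{g_{\bm x,\bm y}\}$ is uniformly bounded and uniformly equicontinuous in $\bm z$, so the convergence of the Riemann sums is uniform in $(\bm x,\bm y)$. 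With $\sup_{i,j}|\varepsilon_{i,j}(N)|\to 0$ and the trivial bound $\frac{1}{N^2}\sum_{i,j}\varepsilon_{i,j}(N)^2 \le \sup_{i,j}\varepsilon_{i,j}(N)^2$, the numerator average vanishes. Combining with $\|\bm K_{\Ccal}\|_F \gtrsim N^2$ (bounded below away from $0$ after normalization by $N^2$) yields $\|\bm K_{\Ccal}-\bm K_{\Acal}\bm K_{\Bcal}\|_F / \|\bm K_{\Ccal}\|_F \to 0$.

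The hard part is the uniformity of the Riemann-sum convergence in the parameters $(\bm x,\bm y)$: \eqref{eq:unifmod1} as stated is a pointwise statement for a single integrand, so one must either invoke a quantitative discrepancy bound (e.g.\ a Koksma--Hlawka type estimate, using that $g_{\bm x,\bm y}$ has uniformly bounded variation when the kernels are smooth enough, as in the Mat\'ern case) or argue abstractly via a compactness/Arzel\`a--Ascoli reduction of the parameter family to finitely many integrands. A Koksma--Hlawka argument has the advantage of also producing an explicit rate in terms of the star-discrepancy of $X$, which is consistent with the "asymptotically uniform modulo one" hypothesis; the purely qualitative equicontinuity route is cleaner but gives no rate. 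I would present the qualitative version in the main text and remark that quantitative versions follow under additional regularity of $\kernel_{\Acal},\kernel_{\Bcal}$ together with low-discrepancy assumptions on $X$. A secondary, more minor point is to make sure the diagonal terms $k=i$ or $k=j$ in the sum, and any boundary effects from $\Omega$ not being exactly a cube, do not spoil the estimate; these contribute $\mathcal{O}(1)$ terms to a sum of size $\mathcal{O}(N^2)$ and are harmless after division.
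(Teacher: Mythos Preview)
Your proposal is correct and follows essentially the same route as the paper: write the entrywise difference as a Riemann-sum error for \eqref{eq:kernel_C}, control the Frobenius norm of the difference as $o(N^4)$, and divide by $\|\bm K_{\Ccal}\|_F^2\sim N^4$. In fact you are more careful than the paper, which passes from the double sum to $N^4\int_\Omega\int_\Omega[\cdot]^2$ and then to $o(N^4)$ without isolating the uniformity-in-$(\bm x,\bm y)$ issue; your equicontinuity/Koksma--Hlawka discussion makes explicit exactly the step the paper leaves implicit.
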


\begin{proof}
On the one hand, we conclude from \eqref{eq:integral} that, as $N\to \infty$,
\begin{align*}
  \|\bm{K}_{\Ccal}-\bm{K}_{\Acal}\bm{K}_{\Bcal}\|_F^2
  	&= \sum_{i,j=1}^N
		\bigg[\frac{N}{|\Omega|}\kernel_{\Ccal}({\bm x}_i,{\bm x}_j) 
		- \sum_{k=1}^N 
	\kernel_{\Acal}({\bm x}_i,{\bm z}_k)\kernel_{\Bcal}({\bm z}_k,{\bm x}_j)
		\bigg]^2\\
 &\sim N^4 \int_\Omega\int_\Omega	\bigg[\kernel_{\Ccal}({\bm x},{\bm y})
 	-\frac{|\Omega|}{N}\sum_{k=1}^N 
	\kernel_{\Acal}({\bm x},{\bm x}_k)\kernel_{\Bcal}({\bm x}_k,{\bm y})
	\bigg]^2\d\bm{x}\d\bm{y}\\	
		 &= o(N^4).
\end{align*}
On the other hand, we find likewise
\[
  \|\bm{K}_{\Ccal}\|_F^2\sim\int_\Omega\int_\Omega N^2
   \kernel_{\Ccal}({\bm x},{\bm y})^2\d\bm{x}\d\bm{y}
	\sim N^4.
\]
This implies the assertion.
\end{proof}

\begin{remark}
We mention that the consistency bound 
in the preceding theorem is rather crude.
Under provision of stronger kernel-function regularity, 
corresponding higher convergence rates can be achieved, 
given that $X$ satisfies
appropriate higher-order quasi-Monte Carlo designs, 
see, e.g., \cite{DKP22} and the references there.
\end{remark}

Let \(\bm{K}_{\Acal}^\eta,\bm{K}_{\Bcal}^\eta,\bm{K}_{\Ccal}^\eta\)
be compressed with respect to the same compression pattern.
We assume for given \(\varepsilon(\eta)>0\)
that \(\eta\) in \eqref{eq:relerr} is chosen
such that
\[
  \big\|{\bm K}^\Sigma-{\bm K}^\eta\big\|_F
   \le\varepsilon(\eta){\big\|{\bm K}^\Sigma\big\|_F},\quad
   \text{for }{\bm K}\in\{\bm{K}_{\Acal},\bm{K}_{\Bcal},\bm{K}_{\Ccal}\}.
\]
Then, a repeated application of the triangle inequality yields
\begin{align*}
&\|\bm{K}_{\Ccal}^\eta-\bm{K}_{\Acal}^\eta\bm{K}_{\Bcal}^\eta\|_F\\
&\quad\le\|\bm{K}_{\Ccal}^\Sigma-\bm{K}_{\Ccal}^\eta\|_F + 
\|\bm{K}_{\Acal}^\Sigma\|_F\|\bm{K}_{\Bcal}^\Sigma-\bm{K}_{\Bcal}^\eta\|_F
+\|\bm{K}_{\Bcal}^\eta\|_F\|\bm{K}_{\Acal}^\Sigma-\bm{K}_{\Acal}^\eta\|_F\\
&\quad\leq\varepsilon(\eta)\big(\|\bm{K}_{\Ccal}\|_F
+\|\bm{K}_{\Acal}\|_F\|\bm{K}_{\Bcal}\|_F
+\big(1+\varepsilon(\eta)\big)\|\bm{K}_{\Acal}\|_F\|\bm{K}_{\Bcal}\|_F\big)\\
&\quad\lesssim \varepsilon(\eta)\big(\|\bm{K}_{\Ccal}\|_F
+\|\bm{K}_{\Acal}\|_F\|\bm{K}_{\Bcal}\|_F\big).
\end{align*}
This means that we only need to compute 
$\mathcal{O}(N\log N)$ matrix entries to determine an approximate 
version $(\bm{K}_{\Acal}^\eta\bm{K}_{\Bcal}^\eta)^\eta$
of the product $\bm{K}_{\Acal}^\eta\cdot\bm{K}_{\Bcal}^\eta$. 
We like to stress that this formatted matrix multiplication is exact 
on the given compression pattern. The next theorem gives a cost bound on the
matrix multiplication.
\begin{theorem}\label{thm:product}
Consider two kernel matrices 
\[
\bm{K}_{\Acal}^\eta=[a_{(j,k),(j',k')}],
\quad 
\bm{K}_{\Bcal}^\eta=[b_{(j,k),(j',k')}]\in\Rbb^{N\times N}
\] 
in samplet representation which are $S$-compressed with respect to the 
compression
pattern induced by the $\eta$-admissibility condition \eqref{eq:cutoff}. 

Then, computing with respect to the same compression pattern 
the matrix 
$\bm{K}_{\Ccal}^\eta=[c_{(j,k),(j',k')}]\in\Rbb^{N\times N}$, 
where the nonzero entries are given by the discrete inner product 
\begin{equation}\label{eq:product}
  c_{(j,k),(j',k')} = \sum_{\ell=0}^J\sum_{m\in\nabla_\ell} 
  a_{(j,k),(\ell,m)} b_{(\ell,m),(j',k')},
\end{equation}
is of cost $\mathcal{O}(N\log^2 N)$.
\end{theorem}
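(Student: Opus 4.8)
The plan is to count, level by level, the number of arithmetic operations needed to evaluate the sum in \eqref{eq:product} over all index pairs $(j,k),(j',k')$ that lie in the $\eta$-compression pattern. First I would recall from the structure of the pattern that a nonzero entry $a_{(j,k),(\ell,m)}$ requires $\dist(\nu_{j,k},\nu_{\ell,m})<\eta\max\{\diam(\nu_{j,k}),\diam(\nu_{\ell,m})\}$, and by the quasi-uniformity / balancedness assumptions (Theorem~\ref{theo:waveletProperties}(1),(3)) the number of samplets on level $\ell$ is $\sim 2^\ell$, each supported on a cluster of diameter $\sim 2^{-\ell/d}$. The first key step is therefore the standard sparsity bookkeeping: for a fixed samplet $\sigma_{j,k}$ on level $j$, the number of level-$\ell$ samplets $\sigma_{\ell,m}$ that are $\eta$-close to it is $\mathcal O\big(\max\{1,2^{(\ell-j)}\}\big)$ when $\ell\ge j$ and $\mathcal O(1)$ when $\ell\le j$ — more precisely, one uses that a cluster of the coarser level covered by finer clusters within distance $\eta\cdot 2^{-\min(j,\ell)/d}$ contains $\mathcal O\big(2^{|\ell-j|}\big)$ of them only up to the coarser level's granularity; summed against the $\mathcal O(2^j)$ samplets on level $j$ this reproduces the $\mathcal O(N\log N)$ count of nonzeros already asserted in the Corollary.

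The heart of the cost estimate is that the matrix–matrix product is \emph{not} simply proportional to the number of output entries, because each output entry $c_{(j,k),(j',k')}$ is a sum over all intermediate samplets $(\ell,m)$ for which \emph{both} $a_{(j,k),(\ell,m)}$ and $b_{(\ell,m),(j',k')}$ are present in the pattern. Thus the second key step is to bound
\[
\#\{\text{multiplications}\}=\sum_{(j,k)}\sum_{(j',k')}\#\big\{(\ell,m): (j,k)\!\sim\!(\ell,m)\ \text{and}\ (\ell,m)\!\sim\!(j',k')\big\}
=\sum_{(\ell,m)}\big(\#\{(j,k):(j,k)\!\sim\!(\ell,m)\}\big)^2 ,
\]
where $\sim$ denotes "in the compression pattern". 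For a fixed intermediate samplet $\sigma_{\ell,m}$, the number of samplets $\sigma_{j,k}$ across \emph{all} levels $j$ that are $\eta$-admissible-close to $\nu_{\ell,m}$ is $\mathcal O(\log N)$: summing $\mathcal O(\max\{1,2^{j-\ell}\})$ over the $\mathcal O(\log N)$ coarser levels $j\le\ell$ gives $\mathcal O(\log N)$, and over the finer levels $j\ge\ell$ the geometric sum of $2^{\ell-j}$ refinements is $\mathcal O(1)$ per level but telescopes against cluster granularity to $\mathcal O(\log N)$ overall (this is exactly the per-column nnz bound behind $\mathcal O(N\log N)$ total). Hence each term in the rewritten sum is $\mathcal O(\log^2 N)$, and since there are $\sum_\ell 2^\ell\sim N$ intermediate samplets, the total is $\mathcal O(N\log^2 N)$.

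The main obstacle I expect is the careful verification of the per-intermediate-samplet fan-in bound $\#\{(j,k):(j,k)\sim(\ell,m)\}=\mathcal O(\log N)$ across levels, and of its square being summable: one must track both the cross-level admissibility (clusters on different levels, as emphasized in the Remark after Theorem~\ref{thm:compression}) and the fact that on each level only boundedly many finer clusters, but up to $\mathcal O(2^{j-\ell})$ coarser-indexed finer samplets within a fixed physical neighbourhood, contribute; getting the geometric series to collapse to a single $\log N$ rather than $\log^2 N$ on one side is where the quasi-uniformity and the $\diam(\nu)\sim 2^{-j_\nu/d}$ estimate from Theorem~\ref{theo:waveletProperties}(3) must be used sharply. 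A secondary, routine point is that assembling the result \emph{on the prescribed pattern} means we only accumulate into the $\mathcal O(N\log N)$ admissible output slots, so the final sparse data-structure overhead (e.g.\ sorting or hashing index pairs) contributes at most another logarithmic factor, which is absorbed into $\mathcal O(N\log^2 N)$.
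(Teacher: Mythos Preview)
Your rewriting of the multiplication count as
\[
\sum_{(\ell,m)}\Big(\#\{(j,k):(j,k)\sim(\ell,m)\}\Big)^2
\]
silently drops the output-pattern constraint $(j,k)\sim(j',k')$, so it is only an upper bound on the true work. That would be harmless if the bound were tight enough, but it is not: your claimed per-intermediate fan-in $\#\{(j,k):(j,k)\sim(\ell,m)\}=\mathcal O(\log N)$ is false. For a coarse-level samplet $\sigma_{\ell,m}$, the admissibility radius is governed by the \emph{larger} diameter $\sim 2^{-\ell/d}$, so on each finer level $j\ge\ell$ there are $\mathcal O(2^{j-\ell})$ neighbours, not $\mathcal O(1)$; summing over $j$ gives fan-in $\mathcal O(\ell+2^{J-\ell})$, which is $\Theta(N)$ when $\ell=0$. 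Plugging this into your sum-of-squares yields
\[
\sum_{\ell}2^{\ell}\big(\ell+2^{J-\ell}\big)^2\gtrsim\sum_{\ell}2^{2J-\ell}\sim N^2,
\]
so the approach cannot deliver $\mathcal O(N\log^2 N)$.

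The paper's proof avoids this by \emph{not} dropping the output constraint. It organises the count by output level-block $(j,j')$: with $j\ge j'$ there are only $\mathcal O(2^{j})$ admissible output entries in the block, and for each such entry the number of intermediate $(\ell,m)$ with \emph{both} $(j,k)\sim(\ell,m)$ and $(\ell,m)\sim(j',k')$ is at most $\mathcal O(\max\{2^{\ell-j},1\})$ on level $\ell$, summing to $\mathcal O(\max\{2^{J-j},j\})$. Multiplying and summing over $j'\le j$ and $j$ gives $\sum_j j\cdot 2^j\max\{2^{J-j},j\}=\mathcal O(N\log^2 N)$. The crucial difference is that the output constraint forces $(j,k)$ and $(j',k')$ to be close to each other, which is what prevents the coarse-level blow-up in the intermediate count; your fan-in-squared reorganisation throws exactly this information away.
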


\begin{proof}
To estimate the cost of the matrix multiplication,
we shall make use of the compression rule \eqref{eq:cutoff}.
We assume for all clusters that $\diam(\nu)\sim 2^{-j/d}$ if $\nu$ is 
on level $j$. Thus, the samplet $\sigma_{j,k}$ has approximately 
the diameter $2^{-j/d}$ and, therefore, only $\mathcal{O}(2^{\ell-j})$ 
samplets $\sigma_{\ell,m}$ of diameter $\sim 2^{-\ell/d}$ are found in its 
nearfield if $\ell\ge j$ while only $\mathcal{O}(1)$ are found if $\ell<j$. 
For fixed level $0\le\ell\le J$ in \eqref{eq:product}, we thus have at 
most $\mathcal{O}(\max\{2^{\ell-\max\{j,j'\}},1\})$ nonzero products
to evaluate per coefficient $c_{(j,k),(j',k')}$. 
We assume without loss of 
generality that $j\ge j'$ and sum over $\ell$, which yields the cost 
$\mathcal{O}(\max\{2^{J-j},j\})$. Per target block matrix ${\bm C}_{j,j'} 
= [c_{(j,k),(j',k')}]_{j,j'}$, we have $\mathcal{O}(2^{\max\{j,j'\}}) = 
\mathcal{O}(2^j)$ nonzero coefficients. Hence, the cost for
computing the desired target block is $\mathcal{O}(2^j
\max\{2^{J-j},j\})$. We shall next sum over $j$ and $j'$
\begin{align*}
\sum_{j=0}^J \sum_{j'=0}^j \mathcal{O}(2^j\max\{2^{J-j},j\})
&= \sum_{j=0}^J \sum_{j'=0}^j \mathcal{O}(\max\{N,j2^j\})\\
&= \sum_{j=0}^J \mathcal{O}(j\max\{N,j 2^j\}) \\
&= \mathcal{O}(N\log^2 N).
\end{align*}
\end{proof}

\subsection{Sparse selected inversion}\label{subs:selinv}
\label{sec:SPSI}
Having addition and multiplication of kernel matrices
at our disposal, we consider the matrix inversion next.
To this end, observe that the inverse \({\Acal}^{-1}\) of a 
pseudodifferential operator \({\Acal}\) from a suitable algebra 
of pseudodifferential operators, provided that it exists, is again 
a pseudodifferential operator, see Section \ref{sec:pseudos}. 
However, if ${\Acal}$ is a pseudodifferential operator of negative 
order as in the present RKHS case, the operator
\({\Acal}^{-1}\) is of positive order and hence gives rise to
a singular kernel which does  not satisfy the condition
\eqref{eq:kernel_estimate}. Even so, in the regime of
kernel matrices we are rather interested in inverting regularized 
pseudodifferential operators, i.e., ${\Acal}+\mu{I}$,
where \(I\) denotes the identity.
For such operators, we have the following lemma.

\begin{lemma}
Let ${\Acal}$ be a pseudodifferential operator of order $s \le 0$
with symmetric and positive semidefinite kernel function. 

Then, for any $\mu>0$,
the inverse of ${\Acal}+\mu{I}$ can be decomposed into
$\frac{1}{\mu}{I}-{\Bcal}$ with
\begin{equation}\label{eq:perturbation}
  {\Bcal} = \frac{1}{\mu}({\Acal}+\mu{I})^{-1}{\Acal}.
\end{equation}
Especially, ${\Bcal}$ is also a pseudodifferential operator of order 
$s$, which admits a symmetric and positive semidefinite kernel function.
\end{lemma}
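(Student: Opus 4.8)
The plan is to verify the identity \eqref{eq:perturbation} algebraically first, and then read off the structural claims from the pseudodifferential calculus summarized in Appendix~\ref{sec:pseudos}. For the algebraic part, I would start from the Neumann-type manipulation: since $\Acal+\mu I = \mu I + \Acal$, one has $({\Acal}+\mu{I})\big(\tfrac{1}{\mu}{I}-{\Bcal}\big) = {I} + \tfrac{1}{\mu}{\Acal} - ({\Acal}+\mu{I}){\Bcal}$, and substituting ${\Bcal} = \tfrac{1}{\mu}({\Acal}+\mu{I})^{-1}{\Acal}$ makes the last two terms cancel, giving the identity ${I}$; the symmetric product on the other side works the same way because $\Acal$ and $(\Acal+\mu I)^{-1}$ commute. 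One first needs to note that $\Acal + \mu I$ is invertible: $\Acal$ is self-adjoint and positive semidefinite (its kernel is SPD), hence its spectrum lies in $[0,\infty)$, so $\Acal+\mu I$ has spectrum in $[\mu,\infty)$ and is boundedly invertible for every $\mu>0$. Equivalently, rewrite \eqref{eq:perturbation} as ${\Bcal} = \tfrac{1}{\mu}\big({I}-\mu({\Acal}+\mu{I})^{-1}\big)$, which exhibits $\tfrac1\mu I - \Bcal = ({\Acal}+\mu{I})^{-1}$ directly.

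Next I would establish that $\Bcal$ is a pseudodifferential operator of order $s$. Here I invoke the calculus: $\Acal\in S^s_{1,0}$ with $s\le 0$, and since $\Acal$ is elliptic and self-adjoint nonnegative, $\Acal+\mu I$ is elliptic of order $\max\{s,0\}=0$, i.e.\ of order $0$, and its parametrix/inverse $({\Acal}+\mu{I})^{-1}$ is again a $\Psi$DO of order $0$ (a lower-order correction to $\tfrac1\mu I$; in fact $({\Acal}+\mu I)^{-1}-\tfrac1\mu I\in S^{s}_{1,0}$). Composing the order-$0$ operator $({\Acal}+\mu{I})^{-1}$ with the order-$s$ operator $\Acal$, the composition rule in the algebra yields an operator of order $0+s = s$. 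This is exactly the place where the "suitable algebra" hypothesis is used, so the main obstacle is simply making sure the invoked closure properties — closure under inversion of elliptic elements, and the order-additivity of composition — are all covered by the class stated in the appendix (Hörmander $S^m_{1,0}$, or the finite-smoothness Calderón–Zygmund classes of \cite{Abels,Taylor91}); for the classical $S^m_{1,0}$ case this is standard.

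Finally, the symmetry and positive semidefiniteness of the kernel of $\Bcal$: from $\Bcal = \tfrac1\mu\big(I - \mu(\Acal+\mu I)^{-1}\big) = \tfrac1\mu (\Acal+\mu I)^{-1}\Acal$, and since $\Acal$ and $(\Acal+\mu I)^{-1}$ are commuting self-adjoint operators, $\Bcal$ is self-adjoint, hence its Schwartz kernel is symmetric. For positivity, use the spectral calculus: on the spectrum $\lambda\in[0,\infty)$ of $\Acal$ one has $\Bcal = g(\Acal)$ with $g(\lambda)=\tfrac{\lambda}{\mu(\lambda+\mu)}\ge 0$, so $\Bcal$ is a positive semidefinite operator; equivalently $\langle \Bcal h,h\rangle = \tfrac1\mu\langle \Acal(\Acal+\mu I)^{-1}h,h\rangle = \tfrac1\mu\langle \Acal g,g\rangle\ge 0$ where $g=(\Acal+\mu I)^{-1/2}h$ — here one uses that $(\Acal+\mu I)^{-1/2}$ exists and commutes with $\Acal$. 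A positive semidefinite integral operator has a positive semidefinite kernel, which gives the last claim. I expect the algebraic identity to be entirely routine; the only genuine content is the citation bookkeeping for the $\Psi$DO closure properties, which I would pin down by reference to the appendix.
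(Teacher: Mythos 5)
Your proposal is correct and takes essentially the same route as the paper: verify $({\Acal}+\mu I)(\tfrac1\mu I - {\Bcal})=I$ by direct substitution, invoke the $\Psi$DO calculus (order $0$ for $({\Acal}+\mu I)^{-1}$, order-additivity for the composition) to conclude ${\Bcal}$ has order $s$, and deduce symmetry and nonnegativity of the kernel of ${\Bcal}$ from those of ${\Acal}$. The paper states the last step without the spectral-calculus detail you supply, but the argument is the same.
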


\begin{proof}
In view of \eqref{eq:perturbation}, we infer that
\[
 ({\Acal}+\mu{I})\bigg(\frac{1}{\mu}{I}-{\Bcal}\bigg) 
 	= \frac{1}{\mu}{\Acal} +I - ({\Acal}+\mu{I}){\Bcal}
	= I + \frac{1}{\mu}{\Acal}- \frac{1}{\mu}{\Acal} = I.
\]
Therefore, $\frac{1}{\mu}{I}-{\Bcal}$ is the inverse operator
to ${\Acal}+\mu{I}$. Since ${\Acal}+\mu{I}$ is of order 0, $({\Acal}
+\mu{I})^{-1}$ is of order 0, too, and thus $({\Acal}+\mu{I})^{-1}
{\Acal}$ is of the same order as ${\Acal}$. Finally, the symmetry 
and nonnegativity of $\Bcal$ follows from the symmetry and 
nonnegativity of $\Acal$. 
\end{proof}

As a consequence of this lemma, the inverse \(({\bm K}_{\Acal}+\mu{\bm I})^{-1}
\in\Rbb^{N\times N}\) of the associated kernel matrix 
\({\bm K}_{\Acal}+\mu {\bm I}
\in\Rbb^{N\times N}\) is $S$-compressible with respect 
to the same compression pattern as \({\bm K}_{\Acal}\). In \cite{HM}, 
strong numerical evidence was presented that a sparse Cholesky
factorization of a compressed kernel matrix can efficiently be computed by
means of nested dissection, cf.\ \cite{Geo73}. 
This suggests the computation of the inverse 
\(({\bm K}_{\Acal}+\mu{\bm I})^{-1}\)
in samplet basis on the compression pattern of \({\bm K}_{\Acal}\) 
by means of selective inversion~\cite{selinv} of a sparse matrix. 
The approach is outlined below.

Assume that ${\bm A}\in\Rbb^{N\times N}$ is symmetric and positive definite.
There are two steps in the inversion algorithm. The first stage involves 
factorizing the input matrix $\bm{A}$ into $\bm{A}=\bm{LDL}^\intercal$.
The $\bm{L}$ and $\bm{D}$ matrices are used in the second phase to compute 
the selected components of $\bm{A}^{-1}$. The first step will be referred to 
as factorization in the following and the second step as selected inversion.
To explain the second step, let ${\bm A}$ be partitioned according to
\[
{\bm A}=
\begin{bmatrix}
{\bm A}_{11} & {\bm A}_{12}\\ {\bm A}_{12}^\intercal &
{\bm A}_{22}
\end{bmatrix}.
\]
In particular, the diagonal blocks ${\bm A}_{ii}$ are also symmetric
and positive definite.
The selected inversion is based on the identity
\begin{equation}\label{eq:blockInv}
{\bm A}^{-1}
=
\begin{bmatrix}
{\bm A}_{11}^{-1}
+{\bm C}{\bm S}^{-1}{\bm C}^\intercal & {\bm C}{\bm S}^{-1}\\ 
{\bm S}^{-1}{\bm C}^\intercal &
{\bm S}^{-1}
\end{bmatrix},
\end{equation}
where \({\bm S}\isdef {\bm A}_{22}+{\bm A}_{12}^\intercal{\bm C}\)
is the Schur complement with \({\bm C}\isdef-{\bm A}_{11}^{-1}{\bm A}_{12}\).
For sparse matrices, this block algorithm can efficiently be realized based on
the observation that for the computation of the entries of \({\bm A}^{-1}\)
on the pattern of \({\bm L}\) only the entries on the pattern of \({\bm L}\)
are required, as it is well known from the sparse matrix literature,
cp.\ \cite{selinv,DR83,GL81}. 
We note that the pattern of ${\bm A}$ 
is particularly contained in the pattern of \({\bm L}\).
\subsection{Algorithmic aspects}
\label{sec:AlgAsp}
A block selected inversion algorithm has at least two advantages: 
Because $\bm{A}$ is sparse, blocks can be specified in terms of supernodes
\cite{selinv}. This allows us to use level-3 \texttt{BLAS} to construct an efficient 
implementation by leveraging memory hierarchy in current microprocessors. 
A supernode is a group of nodes with the same nonzero structure below the 
diagonal in their respective columns (of their $\bm{L}$ factor). The supernodal 
approach for sparse symmetric factorization represents the factor $\bm{L}$ as 
a set of supernodes, each of which consists of a contiguous set of $\bm{L}$ 
columns with identical nonzero patterns, and each supernode is stored as 
a dense submatrix to take advantage of level-3 \texttt{BLAS} calculations.

Taking these consideration
as a starting point, it is natural to employ the selected inversion approach
presented in~\cite{RINLA:2021} and available in~\cite{PARDISO:2022}
in order to directly compute the entries on the pattern of the inverse matrix.
For the particular implementation of the selected inversion, we rely on
\texttt{Pardiso}. 
For larger kernel matrices, which cannot be indexed
by \(32bit\) integers due to the comparatively large number of 
non-zero entries, we combine the selected inversion with a divide and conquer
approach based on the identity \eqref{eq:blockInv}.
The inversion of the \({\bm A}_{11}\) block and of the Schur complement
\({\bm S}\) are performed with \texttt{Pardiso} (exploiting symmetry), 
while the other arithmetic operations, i.e., addition and multiplication, 
are performed in a formatted way, compare\ Theorem~\ref{thm:product}.

\subsection{Matrix functions}
\label{sec:MatFun}
%
Based on the $S$-formatted multiplication and inversion 
of operators represented in samplet basis, 
certain holomorphic functions of an $S$-compressed operator
also admit $S$-formatted approximations with, essentially,
corresponding approximation accuracies.

To illustrate this,
we recall the method in \cite{Hale2008}.
This approach employs the \emph{contour integral representation}
\begin{equation}\label{eq:Cntr}
  f({\bm A}) = \frac{1}{2\pi i} \int_\Gamma f(z) (z{\bm I}-{\bm A})^{-1}\d z,
\end{equation}
where $\Gamma$ is a closed contour being contained in the 
analyticity region of $f$ and winding once around the spectrum
$\sigma({\bm A})$ in counterclockwise direction. As is well-known, 
analytic functions $f$ of elliptic, self-adjoint pseudodifferential 
operators yield again pseudodifferential operators in the same 
algebra, see, e.g., \cite[Chap.XII.1]{Taylor81}. Hence, ${\bm B} 
\isdef f({\bm A})$ is $S$-compressible provided that $f$ is analytic. 
Especially, the $S$-compressed representation 
$\big(f({\bm A}^\eta)\big)^{\eta}$ satisfies
\begin{equation}\label{eq:generalMatFunc}
\begin{aligned}
\big\|{\bm B}^\Sigma-\big(f({\bm A}^{\eta})\big)^{\eta}\big\|_F
&\leq\|{\bm B}^\Sigma-{\bm B}^{\eta}\|_F
+\big\|\big(f({\bm A}^\Sigma)-f({\bm A}^{\eta})\big)^{\eta}\big\|_F\\
&\leq \varepsilon\|{\bm B}\|_F + L\|{\bm A}^\Sigma-{\bm A}^{\eta}\|_F\\
&\leq\varepsilon\big(\|{\bm B}\|_F+L\|{\bm A}\|_F\big).
\end{aligned}
\end{equation}
Herein, \(L\) denotes the Lipschitz constant of the function \(f\).
In other words, estimate \eqref{eq:generalMatFunc} implies 
that the error of the approximation of the $S$-formatted matrix 
function $\big(f({\bm A}^{\eta})\big)^{\eta}$ is 
rigorously controlled by the sum of the input error 
$\|{\bm A}^\Sigma-{\bm A}^{\eta}\|_F$ and the 
compression error for the exact output $\|{\bm B}^\Sigma
-{\bm B}^{\eta}\|_F$. The latter is under control
if the underlying pseudodifferential operator is of order
$s < -d$ since then the kernel is continuous and satisfies
\eqref{eq:kernel_estimate}. In the other cases, some
analysis is needed to control this error (see below).

For the numerical approximation of the contour integral  \eqref{eq:Cntr},
one has to apply an appropriate quadrature formula. 
Exemplarily, we consider the matrix square root,
i.e., $f(z) = \sqrt{z}$ for ${\rm Re} z > 0$.
This occurs for example in the efficient path simulation of Gaussian 
processes in spatial statistics. 
We shall here apply, see~\cite[Eq.~(4.4) and comments below]{Hale2008},
the approximation 
\begin{equation}\label{eq:ContInt}
{\bm A}^{-1/2}\approx\frac{2 E \sqrt{\underline{c}}}{\pi K} 
\sum_{k=1}^K\frac{\operatorname{dn}
\left(t_k | 1-\varkappa_{\bm A}\right)}
{\operatorname{cn}^2\left(t_k | 1 - \varkappa_{\bm A}\right)}  
\left({\bm A} + w_k^2{\bm I}\right)^{-1},
\quad {\bm A}^{1/2} = {\bm A}\cdot {\bm A}^{-1/2}.
\end{equation}
Here, $\operatorname{sn}, \operatorname{cn}$ and 
$\operatorname{dn}$ are the Jacobian elliptic 
functions~\cite[Chapter~16]{Abramowitz1964}, 
$E$ is the complete elliptic integral of the second kind
associated with the parameter $\varkappa_{\bm A} := 
\underline{c}/\overline{c}$ \cite[Chapter~17]{Abramowitz1964}, 
and, for $k\in\{1,\ldots, K \}$, 
\[
w_k\isdef\sqrt{\underline{c}}\,\frac{
 	\operatorname{sn}\left(t_k | 1 - \varkappa_{\bm A}\right)}
	{\operatorname{cn}\left(t_k | 1 - \varkappa_{\bm A}\right)}
\quad 
\text{and} 
\quad 
t_k\isdef\frac{E}{K}\big(k-\tfrac{1}{2}\big).
\]
The quadrature approximation \eqref{eq:ContInt}
of the contour integral \eqref{eq:Cntr}
for the matrix square root is known to converge 
root-exponentially (e.g.~\cite[Lemma 3.4]{BP15})
in the number $K$ of quadrature nodes in \eqref{eq:ContInt} 
of the contour integral. Hence, approximate representations 
to algebraic with respect to $N$
consistency orders can be achieved with 
$K\sim|\varepsilon(\eta)|$, resulting in overall log-linear complexity 
of numerical realization of \eqref{eq:ContInt} in $S$-format.
We also remark that the quadrature shifts $w_k^2$ in the inversions
which occur in \eqref{eq:ContInt}
act as regularizing ``nuggets'' of a possibly ill-conditioned 
${\bm A}$.
The input parameters $0< \underline{c} < \overline{c}$ 
shall provide bounds to the spectrum of ${\bm A}$, i.e., 
$\underline{c}\approx\lambda_{\min}({\bm A})$ and 
$\overline{c}\approx\lambda_{\max}({\bm A})$. Note that we also
assume here that ${\bm A}$ is symmetric and positive definite.
Moreover, we should mention that, except for the quadrature error, 
\eqref{eq:ContInt} computes the square root $({\bm A}^\eta)^{-1/2}$
of the compressed input ${\bm A}^\eta$ in an exact way on the 
compression pattern when we use the selected inversion algorithm
from Subsection~\ref{subs:selinv}.

That $({\bm A}^\eta)^{-1/2}$ is indeed $S$-compressible 
is a consequence of the following lemma.

\begin{lemma}
Let ${\Acal}$ be a pseudodifferential operator of order $s \leq 0$
with symmetric and positive semidefinite kernel function. 
Then, for any $\mu>0$,
the inverse square root of ${\Acal}+\mu{I}$ can be 
written as
$\frac{1}{\sqrt{\mu}}{I}-{\Bcal}$ 
with ${\Bcal}$ 
being also a pseudodifferential operator of order $s$, which admits a 
symmetric and positive semidefinite kernel function.
\end{lemma}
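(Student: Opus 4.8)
The plan is to follow the template of the earlier lemma on the regularized inverse and, in fact, to reduce the order count entirely to that result via a ``difference of squares'' identity. First I would fix the functional-analytic setting: since $\Acal$ is a pseudodifferential operator of order $s\le 0$ with symmetric, positive semidefinite kernel, it is bounded on $L^2(\Omega)$, self-adjoint, and nonnegative as an operator; hence $\Acal+\mu I$ is self-adjoint with $\Acal+\mu I\ge\mu I>0$, its spectrum lies in a compact subinterval of $(0,\infty)$, and $z\mapsto z^{-1/2}$ is holomorphic on a neighborhood of it. Moreover $\Acal+\mu I$ is elliptic of order $0$ and invertible, so by the operator calculus recalled in Appendix~\ref{sec:pseudos} both $(\Acal+\mu I)^{-1}$ and $(\Acal+\mu I)^{-1/2}\isdef g(\Acal+\mu I)$, $g(z)=z^{-1/2}$, are again pseudodifferential operators of order $0$. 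I would then \emph{define} $\Bcal\isdef\frac{1}{\sqrt{\mu}}I-(\Acal+\mu I)^{-1/2}$, so that the claimed decomposition holds by construction, and it remains to show $\Bcal$ has order $s$ and a symmetric, positive semidefinite kernel.

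For the order, I would multiply $\Bcal$ by the order-$0$ operator $\frac{1}{\sqrt{\mu}}I+(\Acal+\mu I)^{-1/2}$; since all quantities involved are functions of $\Acal+\mu I$ they commute, and
\[
  \Bcal\Big(\tfrac{1}{\sqrt{\mu}}I+(\Acal+\mu I)^{-1/2}\Big)
  =\tfrac{1}{\mu}I-(\Acal+\mu I)^{-1}
  =\tfrac{1}{\mu}(\Acal+\mu I)^{-1}\Acal,
\]
where the last equality combines the earlier lemma with \eqref{eq:perturbation}. The right-hand side is a pseudodifferential operator of order $s$. Since $\frac{1}{\sqrt{\mu}}I+(\Acal+\mu I)^{-1/2}\ge\frac{1}{\sqrt{\mu}}I>0$ is invertible and elliptic of order $0$, its inverse is again of order $0$, so
\[
  \Bcal=\tfrac{1}{\mu}(\Acal+\mu I)^{-1}\Acal\,
  \Big(\tfrac{1}{\sqrt{\mu}}I+(\Acal+\mu I)^{-1/2}\Big)^{-1}
\]
is a composition of pseudodifferential operators of orders $s$ and $0$, hence of order $s$.

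It then remains to check the two qualitative properties, just as in the earlier lemma. Self-adjointness of $\Acal$ passes to $\Acal+\mu I$, then to $(\Acal+\mu I)^{-1/2}$ via functional calculus with a real-valued function, and finally to $\Bcal$, so its kernel is symmetric. For nonnegativity, operator monotonicity of $t\mapsto t^{1/2}$ (the L\"owner--Heinz inequality) applied to $\Acal+\mu I\ge\mu I$ gives $(\Acal+\mu I)^{1/2}\ge\sqrt{\mu}\,I$, hence $(\Acal+\mu I)^{-1/2}\le\frac{1}{\sqrt{\mu}}I$, which is precisely $\Bcal\ge 0$; thus $\Bcal$ admits a positive semidefinite kernel.

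I expect the only delicate point to be the order bookkeeping --- that subtracting the ``value at $|\xi|=\infty$'', namely $\frac{1}{\sqrt{\mu}}I$, lowers the order of $(\Acal+\mu I)^{-1/2}$ from $0$ back down to $s$. The factorization above, however, defers this to the order statement already established for $\tfrac{1}{\mu}(\Acal+\mu I)^{-1}\Acal$, together with the standard closure of the elliptic order-$0$ class under inversion and of the H\"ormander classes under composition; hence no pseudodifferential estimates beyond those in Appendix~\ref{sec:pseudos} are needed.
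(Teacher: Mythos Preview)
Your proof is correct and follows essentially the same route as the paper: both arrive at the identity
\(\Bcal\big(\tfrac{1}{\sqrt{\mu}}I+(\Acal+\mu I)^{-1/2}\big)=\tfrac{1}{\mu}(\Acal+\mu I)^{-1}\Acal\)
and read off the order of $\Bcal$ from the order-$0$ invertibility of the bracketed factor. Your write-up is in fact slightly more complete, since you spell out the commutativity via functional calculus and give an explicit argument (operator monotonicity of $t\mapsto t^{1/2}$) for the symmetry and positive semidefiniteness of $\Bcal$, which the paper's proof leaves implicit.
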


\begin{proof}
Straightforward calculation shows that the ansatz
\begin{equation}\label{eq:ansatz}
(\Acal+\mu I)^{-1/2} = \frac{1}{\sqrt{\mu}} I-\Bcal
\end{equation}
is equivalent to 
\[
(\Acal+\mu I)\bigg(\frac{1}{\mu} I-\frac{2}{\sqrt{\mu}}\Bcal+\Bcal^2\bigg) = I.
\]
Thus, 
\[
\Bcal\bigg(\frac{2}{\sqrt{\mu}}I-\Bcal\bigg) 
= \frac{1}{\mu}(\Acal+\mu I)^{-1}\Acal,
\]
which in view of \eqref{eq:ansatz} is equivalent to
\[
\Bcal\bigg(\frac{1}{\sqrt{\mu}}I+(\Acal+\mu I)^{-1/2}\bigg) 
= \frac{1}{\mu}\Acal(\Acal+\mu I)^{-1}.
\]
As both, $\frac{1}{\sqrt{\mu}}I+(\Acal+\mu I)^{-1/2}$ and $(\Acal+\mu I)^{-1}$,
are pseudodifferential of order 0, $\Bcal$ must have the
same order as $\Acal$. 
\end{proof}

An alternative to the contour integral for computing the
matrix exponential of a (possibly singular) matrix ${\bm A}$
is given by the direct evaluation of the power series
\[
\exp({\bm A})=\sum_{k=0}^\infty\frac{1}{k!}{\bm A}^k.
\]
As we show in the numerical results, this series converges
very fast for the present matrices under consideration which 
stem from reproducing kernels, since they correspond to 
compact operators.

\section{Numerical results}\label{sec:results}
The computations in this section have been performed on
a single node with two Intel Xeon E5-2650 v3 @2.30GHz
CPUs and up to 512GB of main memory\footnote{The full
specifications can be found on
https://www.euler.usi.ch/en/research/resources.}.
To achieve consistent timings, all computations have
been carried out using 16 cores.
The samplet compression is implemented in \texttt{C++11}
and relies on the
\texttt{Eigen} template
library\footnote{\texttt{https://eigen.tuxfamily.org/}}
for linear algebra operations. Moreover, the selected
inversion is performed by \texttt{Pardiso}.
Throughout this section, we employ samplets with
\(q+1=4\) vanishing moments. The parameter for the
admissibility condition \eqref{eq:cutoff} is set to
\(\eta=1.25\). Together with the \emph{a priori pattern},
which is obtained by neglecting admissible blocks, we
also consider an a posteriori compression by setting
all matrix entries smaller than \(\tau=10^{-5}/N\) to zero
resulting in the \emph{a posteriori pattern}.
\subsection{$\boldsymbol{S}$-formatted matrix multiplication}
To benchmark the multiplication, we consider uniformly
distributed random points on the unit hypercube \([0,1]^d\).
As kernel, we consider the exponential kernel (which is
the Mat\'ern kernel with smoothness parameter $\nu=1/2$
and correlation length $\ell=1$)
\begin{equation}\label{eq:expKernel2}
\kernel({\bm x},{\bm y})=\frac{1}{N}e^{-\|{\bm x}-{\bm y}\|_2}
\end{equation}
Note that we impose the scaling $1/N$ of the kernel function
in order fix the largest eigenvalue of the kernel matrix as its
trace stays uniformly bounded.

We compute the matrix product \({\bm K}^\eta
\cdot\tilde{\bm K}^\eta\), where 
\(\tilde{\bm K}^\eta\) is obtained
from \({\bm K}^\eta\) by relatively perturbing 
each nonzero entry by 10\% additive noise, which is uniformly 
distributed in \([0,1]\). This way, we rule out symmetry effects 
as \(\tilde{\bm K}^\eta\) will not be 
symmetric in general.

\begin{figure}[htb]
\begin{center}
\begin{tikzpicture}
\begin{loglogaxis}[width=0.475\textwidth,grid=both,
 ymin= 5e-1, ymax = 1.4e5, xmin = 5e3, xmax =1e6, title={multiplication time},
    legend style={legend pos=south east,font=\tiny},
    ytick = {1e-1, 1e0, 1e1,1e2,1e3,1e4,1e5},
     ylabel={\small wall time}, xlabel ={\small $N$}]
\addplot[line width=0.7pt,color=blue,mark=square]
table[select coords between index={0}{6},x=npts,y=mult_tim]{./data/output_Multiplication_FINAL_2D.txt};
\addlegendentry{$d=2$};
\addplot[line width=0.7pt,color=red,mark=triangle]
table[select coords between index={0}{6},x=npts,y=mult_tim]{./data/output_Multiplication_FINAL_3D.txt};
\addlegendentry{$d=3$};
\addplot[line width=0.7pt,color=black,dashed]
table[select coords between index={0}{6},x=npts,y expr={0.01 * x}]{./data/output_Inversion_FINAL_apriori_2D.txt};
\addplot[line width=0.7pt,color=black,dashed]
table[select coords between index={0}{6},x=npts,y expr={0.01 * x * ln(x) / ln(5000)}]{./data/output_Inversion_FINAL_apriori_2D.txt};
\addplot[line width=0.7pt,color=black,dashed]
table[select coords between index={0}{6},x=npts,y expr={0.01 * x * ln(x)^2 / ln(5000)^2}]{./data/output_Inversion_FINAL_apriori_2D.txt};
\addplot[line width=0.7pt,color=black,dashed]
table[select coords between index={0}{6},x=npts,y expr={0.01 * x * ln(x)^3 / ln(5000)^3}]{./data/output_Inversion_FINAL_apriori_2D.txt};
\addlegendentry{$\mathcal{O}(N\log^\alpha N)$};
\end{loglogaxis}
\end{tikzpicture}
\begin{tikzpicture}
\begin{loglogaxis}[width=0.475\textwidth,grid=both,
 ymin= 1e-8, ymax = 2e-6, xmin = 5e3, xmax =1e6, title={multiplication error},
    legend style={legend pos=north east,font=\tiny},
    ytick = {1e-8, 1e-7, 1e-6,1e-5},
     ylabel={\small $e_F$}, xlabel ={\small $N$}]
\addplot[line width=0.7pt,color=blue,mark=square]
table[select coords between index={0}{6},x=npts,y=mult_err]{./data/output_Multiplication_FINAL_2D.txt};
\addlegendentry{$d=2$};
\addplot[line width=0.7pt,color=red,mark=triangle]
table[select coords between index={0}{6},x=npts,y=mult_err]{./data/output_Multiplication_FINAL_3D.txt};
\addlegendentry{$d=3$};
\end{loglogaxis}
\end{tikzpicture}
\end{center}
\caption{\label{fig:Mult}Computation times for matrix multiplication (left) and multiplication
errors (right).}
\end{figure}
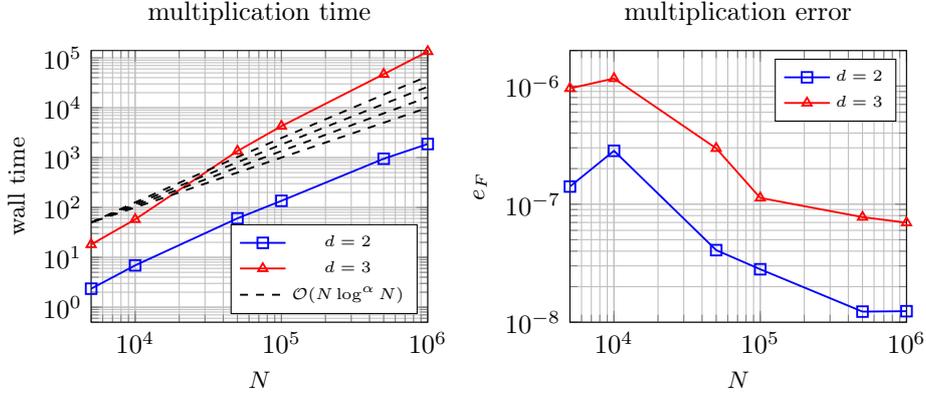

To measure the multiplication error, we consider the estimator
\[
e_F({\bm A})\isdef\frac{\|{\bm A}{\bm X}\|_F}{\|{\bm X}\|_F},
\]
where \({\bm X}\in\Rbb^{N\times 10}\) is a random matrix with uniformly
distributed independent entries.
The left hand side of Figure~\ref{fig:Mult} shows the computation time for a single
multiplication. The dashed lines correspond to the 
asymptotic rates \(\Ocal(N\log^\alpha N)\) for \(\alpha=0,1,2,3\). 
It can be seen that the multiplication time for \(d=2\)
perfectly reflects the expected essentially linear behavior. Though the
graph is steeper for \(d=3\), we expect it to flatten further for larger \(N\).
The right hand side of the plot shows the multiplication error 
\(e_F({\bm K}^\eta\cdot\tilde{\bm K}^\eta-
{\bm K}^\eta\boxdot\tilde{\bm K}^\eta)\),
where the formatted multiplication $\boxdot$ is performed on the a posteriori pattern.
Taking into account that the compression errors for \({\bm K}^\eta\)
are approximately \(5.6\cdot10^{-6}\) for \(d=2\) and \(1.6\cdot10^{-5}\) for \(d=3\),
the obtained matrix product can be considered to be very accurate.
\subsection{$\boldsymbol{S}$-formatted matrix inversion}
\label{sec:MatInv}
In order to assess the numerical performance of the matrix inversion,
we again consider uniformly distributed random points on the unit hypercube \([0,1]^d\).
Since the separation radius \(q_X\)
ranges between \(4.7\cdot10^{-5}\) ($N=5000$) and \(2.8\cdot10^{-7}\) 
($N=1\, 0000\, 000$) for \(d=2\) and 
\(3.8\cdot10^{-4}\) ($N=5000$) and \(3.2\cdot10^{-5}\) ($N=1\, 0000\, 000$)
for \(d=3\), we do not expect that
\({\bm K}^\eta\) to be invertible. Therefore, we rather consider the
regularized version \({\bm K}^\eta +\mu{\bm I}\)
for a ridge parameter \(\mu>0\).

As our theoretical results suggest that the inverse has the same a priori pattern as 
the matrix itself, we first consider the inversion on the a priori pattern for \(d=2\).

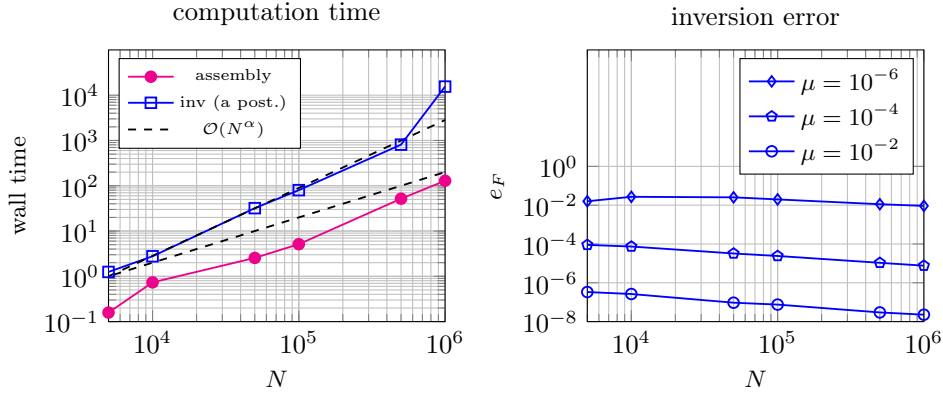
\begin{figure}
\begin{center}
\begin{tikzpicture}
\begin{loglogaxis}[width=0.475\textwidth,grid=both,
 ymin= 1e-1, ymax = 1e5, xmin = 5e3, xmax =1e6, title={computation time},
    legend style={legend pos=north west,font=\tiny},
    ytick = {1e-1, 1e0, 1e1,1e2,1e3,1e4},
     ylabel={\small wall time}, xlabel ={\small $N$}]
\addplot[line width=0.7pt,color=magenta,mark=*]
table[select coords between index={0}{6},x=npts,y=comp_time]{./data/output_Inversion_FINAL_apriori_2D.txt};
\addlegendentry{assembly};
\addplot[line width=0.7pt,color=blue,mark=square]
table[select coords between index={0}{6},x=npts,y=pardiso_t]{./data/output_Inversion_FINAL_apriori_2D.txt};
\addlegendentry{inv (a post.)};
\addplot[line width=0.7pt,color=black,dashed]
table[select coords between index={0}{6},x=npts,y expr={0.0002 * x}]{./data/output_Inversion_FINAL_apriori_2D.txt};
\addplot[line width=0.7pt,color=black,dashed]
table[select coords between index={0}{6},x=npts,y expr={0.0002 * x^1.5 / 5000^0.5}]{./data/output_Inversion_FINAL_apriori_2D.txt};
\addlegendentry{$\mathcal{O}(N^\alpha)$};
\end{loglogaxis}
\end{tikzpicture}
\begin{tikzpicture}
\begin{loglogaxis}[width=0.475\textwidth,grid=both, title={inversion error},
 ymin= 1e-8, ymax = 1e6, xmin = 5e3, xmax =1e6,
    legend style={legend pos=north east,font=\small},
     ytick = {1e-8,1e-6,1e-4,1e-2,1e0},
     ylabel={\small $e_F$}, xlabel ={\small $N$}]
\addplot[line width=0.7pt,color=blue,mark=diamond]
table[select coords between index={0}{6},x=npts,y=inv_err]{./data/output_Inversion_FINAL_apriori_2D.txt};
\addlegendentry{$\mu=10^{-6}$}
\addplot[line width=0.7pt,color=blue,mark=pentagon]
table[select coords between index={7}{13},x=npts,y=inv_err]{./data/output_Inversion_FINAL_apriori_2D.txt};
\addlegendentry{$\mu=10^{-4}$}
\addplot[line width=0.7pt,color=blue,mark=o]
table[select coords between index={14}{20},x=npts,y=inv_err]{./data/output_Inversion_FINAL_apriori_2D.txt};
\addlegendentry{$\mu=10^{-2}$}
\end{loglogaxis}
\end{tikzpicture}
\caption{\label{fig:inversion2Dap}Results for \(d=2\).
Left panel: Computation times for compressed matrix
assembly and selected inversion on the a priori pattern.
Dashed lines indicate linear ($\alpha=1$) and super-linear ($\alpha=1.5$) scaling, respectively.
Right panel: Inversion errors for ridge parameters \(\mu=10^{-6},10^{-4},10^{-2}\).}
\end{center}
\end{figure}
\setlength{\fboxsep}{0.2pt} 

The left hand side of Figure~\ref{fig:inversion2Dap} shows the computation times for the
inverse matrix employing \texttt{Pardiso}. 
The dashed line shows the asymptotic rates \(\Ocal(N^\alpha)\) for \(\alpha=1,1.5\).
For \(N=1\,000\,000\), due to the large amount
of non-zero entries, we use the block inversion with one subdivision. This explains the
bump in the computation time due to the formatted matrix multiplication. Besides this,
\texttt{Pardiso} perfectly exhibits the expected rate of \(N^{1.5}\).
The right hand side of the plot shows the error
\(e_F\big(({\bm K}^\eta
+\mu{\bm I})^{\fbox{\text{\tiny$-\!1$}}}({\bm K}^\eta +\mu{\bm I})-{\bm I}\big)\)
for the ridge parameters \(\mu=10^{-6},10^{-4},10^{-2}\), where
\({}^{\fbox{\text{\tiny$-\!1$}}}\) denotes the selected in version on the pattern of
\({\bm K}^\eta\).
As expected, the error reduces significantly with increasing ridge parameter.

As the a priori pattern typically exhibits significantly less entries, we also investigate
the inversion on the a posteriori pattern. The corresponding results are 
shown in
Figure~\ref{fig:inversion2D}
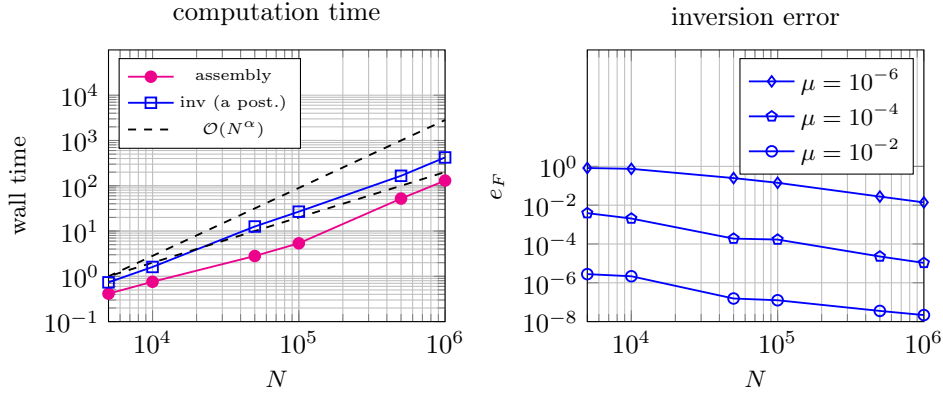
\begin{figure}
\begin{center}
\begin{tikzpicture}
\begin{loglogaxis}[width=0.475\textwidth,grid=both,
 ymin= 1e-1, ymax = 1e5, xmin = 5e3, xmax =1e6, title={computation time},
    legend style={legend pos=north west,font=\tiny},
    ytick = {1e-1, 1e0, 1e1,1e2,1e3,1e4},
     ylabel={\small wall time}, xlabel ={\small $N$}]
\addplot[line width=0.7pt,color=magenta,mark=*]
table[select coords between index={0}{6},x=npts,y=comp_time]{./data/output_Inversion_FINAL_2D.txt};
\addlegendentry{assembly};
\addplot[line width=0.7pt,color=blue,mark=square]
table[select coords between index={0}{6},x=npts,y=pardiso_t]{./data/output_Inversion_FINAL_2D.txt};
\addlegendentry{inv (a post.)};
\addplot[line width=0.7pt,color=black,dashed]
table[select coords between index={0}{6},x=npts,y expr={0.0002 * x}]{./data/output_Inversion_FINAL_apriori_2D.txt};
\addplot[line width=0.7pt,color=black,dashed]
table[select coords between index={0}{6},x=npts,y expr={0.0002 * x^1.5 / 5000^0.5}]{./data/output_Inversion_FINAL_apriori_2D.txt};
\addlegendentry{$\mathcal{O}(N^\alpha)$};
\end{loglogaxis}
\end{tikzpicture}
\begin{tikzpicture}
\begin{loglogaxis}[width=0.475\textwidth,grid=both, title={inversion error},
 ymin= 1e-8, ymax = 1e6, xmin = 5e3, xmax =1e6,
    legend style={legend pos=north east,font=\small},
     ytick = {1e-8,1e-6,1e-4,1e-2,1e0},
     ylabel={\small $e_F$}, xlabel ={\small $N$}]
\addplot[line width=0.7pt,color=blue,mark=diamond]
table[select coords between index={0}{6},x=npts,y=inv_err]{./data/output_Inversion_FINAL_2D.txt};
\addlegendentry{$\mu=10^{-6}$}
\addplot[line width=0.7pt,color=blue,mark=pentagon]
table[select coords between index={8}{14},x=npts,y=inv_err]{./data/output_Inversion_FINAL_2D.txt};
\addlegendentry{$\mu=10^{-4}$}
\addplot[line width=0.7pt,color=blue,mark=o]
table[select coords between index={16}{22},x=npts,y=inv_err]{./data/output_Inversion_FINAL_2D.txt};
\addlegendentry{$\mu=10^{-2}$}
\end{loglogaxis}
\end{tikzpicture}\caption{\label{fig:inversion2D}Results for \(d=2\).
Left panel: Computation times for compressed matrix
assembly and selected inversion on the a posteriori pattern.
	Dashed lines indicate linear ($\alpha=1$) and super-linear ($\alpha=1.5$) scaling, respectively.
	Right panel: Inversion errors for ridge parameters \(\mu=10^{-6},10^{-4},10^{-2}\).}
	\end{center}
	\end{figure}

As can be seen on the left hand side of the figure, the selected inversion
now even exhibits a linear behavior, which is explained by the fixed threshold \(\tau\),
resulting in successively less entries for increasing \(N\). On the other hand,
the errors for the different ridge parameters, depicted on the right hand side of the
same figure, asymptotically exhibit the same behavior as in the a priori case.

\begin{figure}
\begin{center}
\begin{tikzpicture}
\begin{loglogaxis}[width=0.47\textwidth,grid=both,
 ymin= 3e-1, ymax = 1e5, xmin = 5e3, xmax =1e6, title={computation time},
    legend style={legend pos=north west,font=\tiny},
    ytick = {1e-1, 1e0, 1e1,1e2,1e3,1e4},
     ylabel={\small wall time}, xlabel ={\small $N$}]
\addplot[line width=0.7pt,color=magenta,mark=*]
table[select coords between index={0}{6},x=npts,y=comp_time]{./data/output_Inversion_FINAL_3D.txt};
\addlegendentry{assembly};
\addplot[line width=0.7pt,color=red,mark=triangle]
table[select coords between index={0}{6},x=npts,y=pardiso_t]{./data/output_Inversion_FINAL_3D.txt};
\addlegendentry{inv (apost)};

\addplot[line width=0.7pt,color=black,dashed]
table[select coords between index={0}{6},x=npts,y expr={0.001 * x}]{./data/output_Inversion_FINAL_apriori_2D.txt};
\addplot[line width=0.7pt,color=black,dashed]
table[select coords between index={0}{6},x=npts,y expr={0.001 *x^2 / 5000}]{./data/output_Inversion_FINAL_apriori_2D.txt};
\addlegendentry{$\mathcal{O}(N)$};
\end{loglogaxis}
\end{tikzpicture}
\begin{tikzpicture}
\begin{loglogaxis}[width=0.47\textwidth,grid=both, title={error},
 ymin= 1e-8, ymax = 1e6, xmin = 5e3, xmax =1e6,
    legend style={legend pos=north east,font=\small},
     ytick = {1e-8,1e-6,1e-4,1e-2,1e0},
     ylabel={\small wall time}, xlabel ={\small $N$}]
\addplot[line width=0.7pt,color=red,mark=diamond]
table[select coords between index={0}{6},x=npts,y=inv_err]{./data/output_Inversion_FINAL_3D.txt};
\addlegendentry{$\mu=10^{-6}$}
\addplot[line width=0.7pt,color=red,mark=pentagon]
table[select coords between index={7}{13},x=npts,y=inv_err]{./data/output_Inversion_FINAL_3D.txt};
\addlegendentry{$\mu=10^{-4}$}
\addplot[line width=0.7pt,color=red,mark=o]
table[select coords between index={14}{20},x=npts,y=inv_err]{./data/output_Inversion_FINAL_3D.txt};
\addlegendentry{$\mu=10^{-2}$}
\end{loglogaxis}
\end{tikzpicture}
\caption{\label{fig:inversion3D}Results for \(d=3\).
Left panel: Computation times for compressed matrix
assembly and selected inversion on the a posteriori pattern.
Dashed lines indicate linear ($\alpha=1$) and quadratic ($\alpha=2$) scaling, respectively.}
Right panel: Inversion errors for ridge parameters \(\mu=10^{-6},10^{-4},10^{-2}\).
\end{center}
\end{figure}
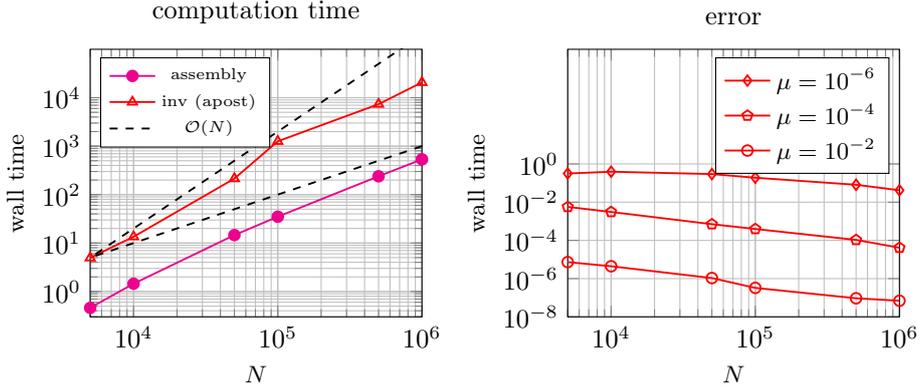

Motivated by the results for \(d=2\), we consider only the inversion on the a posteriori
pattern for \(d=3\). The corresponding results are shown in Figure~\ref{fig:inversion3D}.
On the left hand side of the figure, again the computation times are shown.
The dashed lines show the asymptotic rates \(\Ocal(N^\alpha)\) for \(\alpha=1,2\).
Until \(N=100\,000\), the expected quadratic rate is perfectly matched.
Due to the large number of non-zeros in the case \(d=3\), we have employed the 
block inversion with three recursion steps for \(N>100\,000\), resulting in the
peculiar linear behavior for the respective values in the graph.
The errors depicted on the right hand side show a behavior similar to the
case \(d=2\), with a slightly reduced decay.
%
\subsection{$\boldsymbol{S}$-formatted matrix functions}
%
We compute the matrix square root \({\bm A}^{1/2}\) and the matrix
exponential \(\exp({\bm A})\) for the exponential kernel
\[
\kernel({\bm x},{\bm y})=\frac{1}{N}e^{-2\|{\bm x}-{\bm y}\|_2}
\]
This time, the data points are randomly subsampled from from a 3D
scan of the head of Michelangelo’s David
(The scan is provided by the Statens Museum for Kunst under
the Creative Commons CC0 license), cp.\ Figure~\ref{fig:David}.
\begin{figure}
\begin{center}
\includegraphics[clip, trim = 860 300 860 300,scale=0.085]{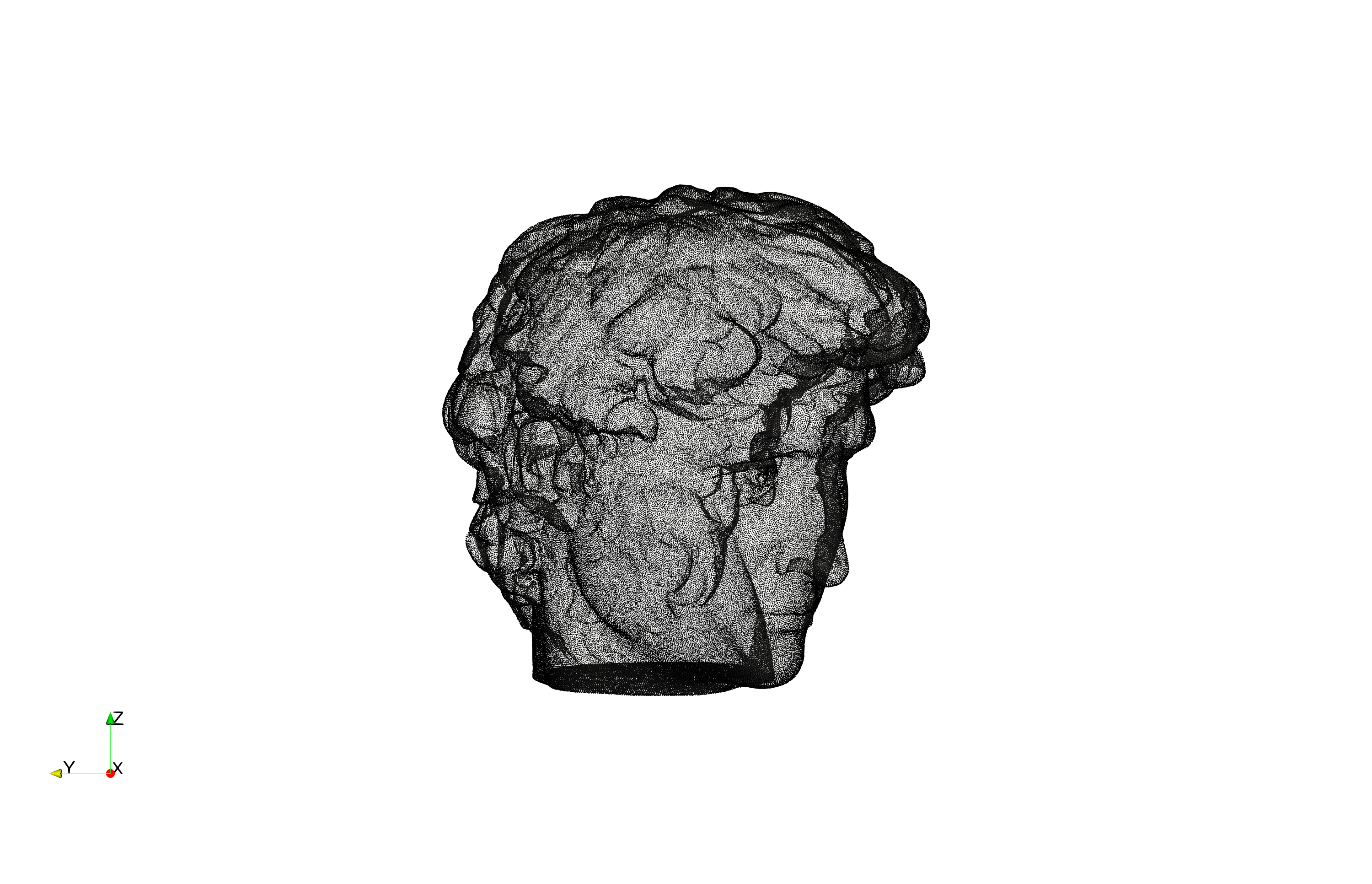}
\caption{\label{fig:David}Data points from a 3D scan of the head
of Michelangelo's David. The scan is provided by the Statens 
Museum for Kunst under the Creative Commons CC0 license.}
\end{center}
\end{figure}
The bounding box of the bunny is \([-0.52,0.42]\times[-0.47,0.46]\times[-0.18,0.78]\).
All other parameters are set as in the examples before. Moreover,
we set the ridge parameter to \(\mu=10^{-4}\). The smallest eigenvalue
is estimated by the ridge parameter, while the largest eigenvalue is upper bounded
by \(1\).
For the contour integral method for the computation of the matrix square root,
we found stagnation in the error for \(K\geq 7\) quadrature points. The
corresponding errors 
\(e_F\big(({\bm K}^\eta+\mu{\bm I})^{\fbox{\text{\tiny$1/2$}}}
({\bm K}^\eta+\mu{\bm I})^{\fbox{\text{\tiny$1/2$}}}-
({\bm K}^\eta+\mu{\bm I})\big)\)
for different values of \(N\) are tabulated in Table~\ref{tab:contour}
\begin{table}[htb]
\begin{center}
\begin{tabular}{|c|r|r|r|r|r|}\hline
$N$ & $5\, 000$ & $10\, 000$ & $50\, 000$ & $100\,000$ & $234\,553$\\\hline
$e_F$ & $1.43\cdot 10^{-3}$& $7.68\cdot 10^{-4}$& $3.90\cdot 10^{-4}$& $2.82\cdot 10^{-4}$& $3.59\cdot 10^{-4}$\\\hline
\end{tabular}
\caption{\label{tab:contour}Errors for the contour integral method for 
\(({\bm K}^\eta+\mu{\bm I})^{1/2}\).}
\end{center}
\end{table}

\setlength\fboxsep{1pt}
Finally, Table~\ref{tab:exp} shows the approximation error
\(e_F\big(\exp({\bm K}^\eta)-\boxed{\exp}({\bm K}^\eta)\big)\)
of the matrix exponential for different values of \(N\). 
The true matrix exponential is estimated by a power series of length \(30\) directly applied
to the matrix \({\bm X}\).
Here, we found that the error
starts to stagnate for more than 8 terms in the expansion. The largest eigenvalue satisfies
\(\|{\bm K}^\eta\|_2\approx 0.337\) (estimated by a Rayleigh quotient iteration
with 50 iterations), hence explaining the rapid convergence. 
Note that we do not require
any regularization here, as just matrix products are computed.

\begin{table}[htb]
\begin{center}
\begin{tabular}{|c|r|r|r|r|r|}\hline
$N$ & $5\, 000$ & $10\, 000$ & $50\, 000$ & $100\,000$ & $234\,553$\\\hline
$e_F$ & $1.48\cdot 10^{-9}$& $5.12\cdot 10^{-10}$& $5.51\cdot 10^{-11}$& $4.08\cdot 10^{-11}$& $1.88\cdot 10^{-11}$\\\hline
\end{tabular}
\caption{\label{tab:exp}Errors for the approximation of
\(\exp({\bm K}^\eta)\) by the power series of the exponential.}
\end{center}
\end{table}

\subsection{Gaussian process implicit surfaces}
\begin{figure}[htb]
\begin{center}
\includegraphics[scale =0.085, clip, trim = 860 300 860 300]{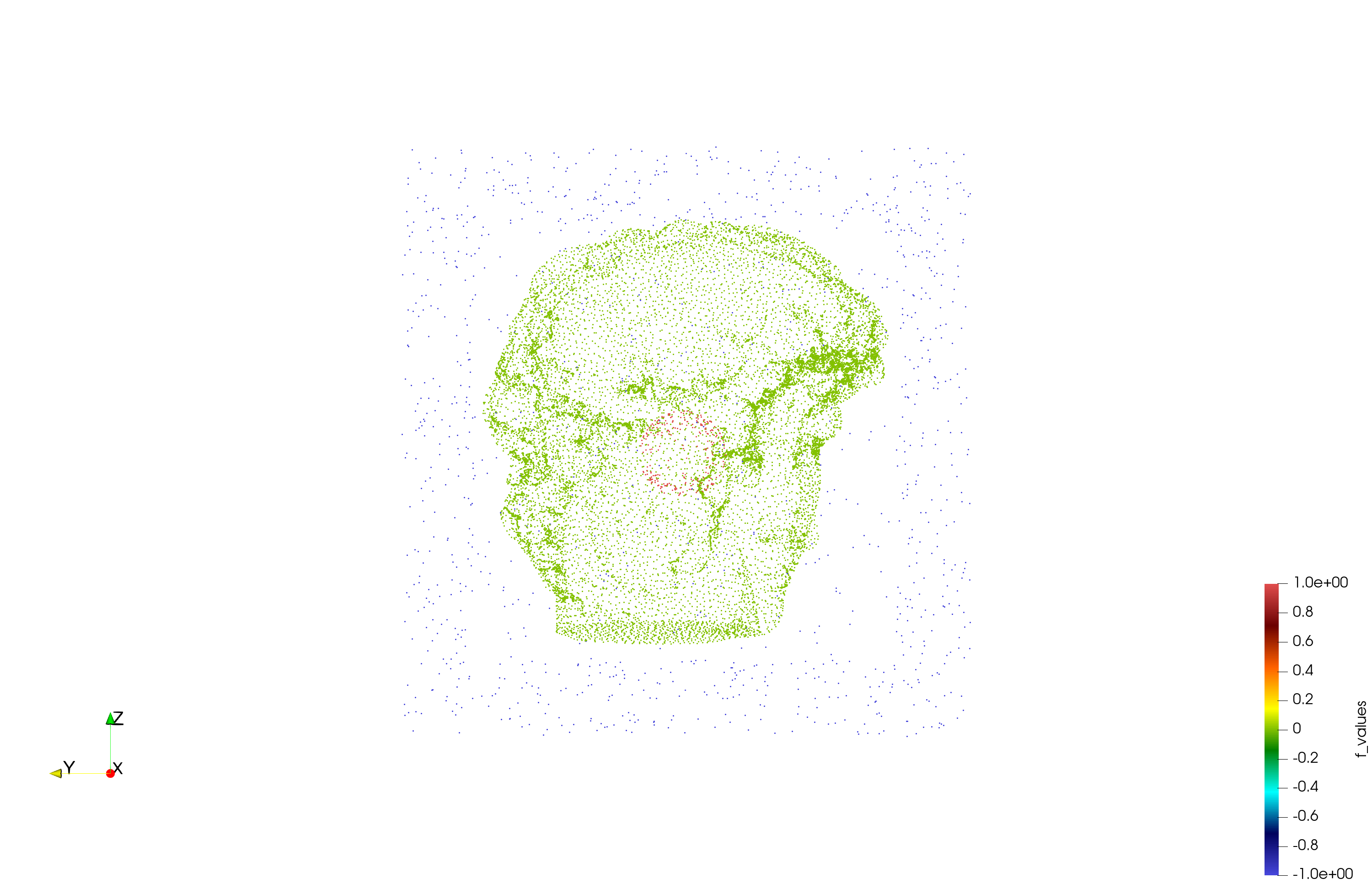}\hfill
\includegraphics[scale =0.085, clip, trim = 860 300 860 300]{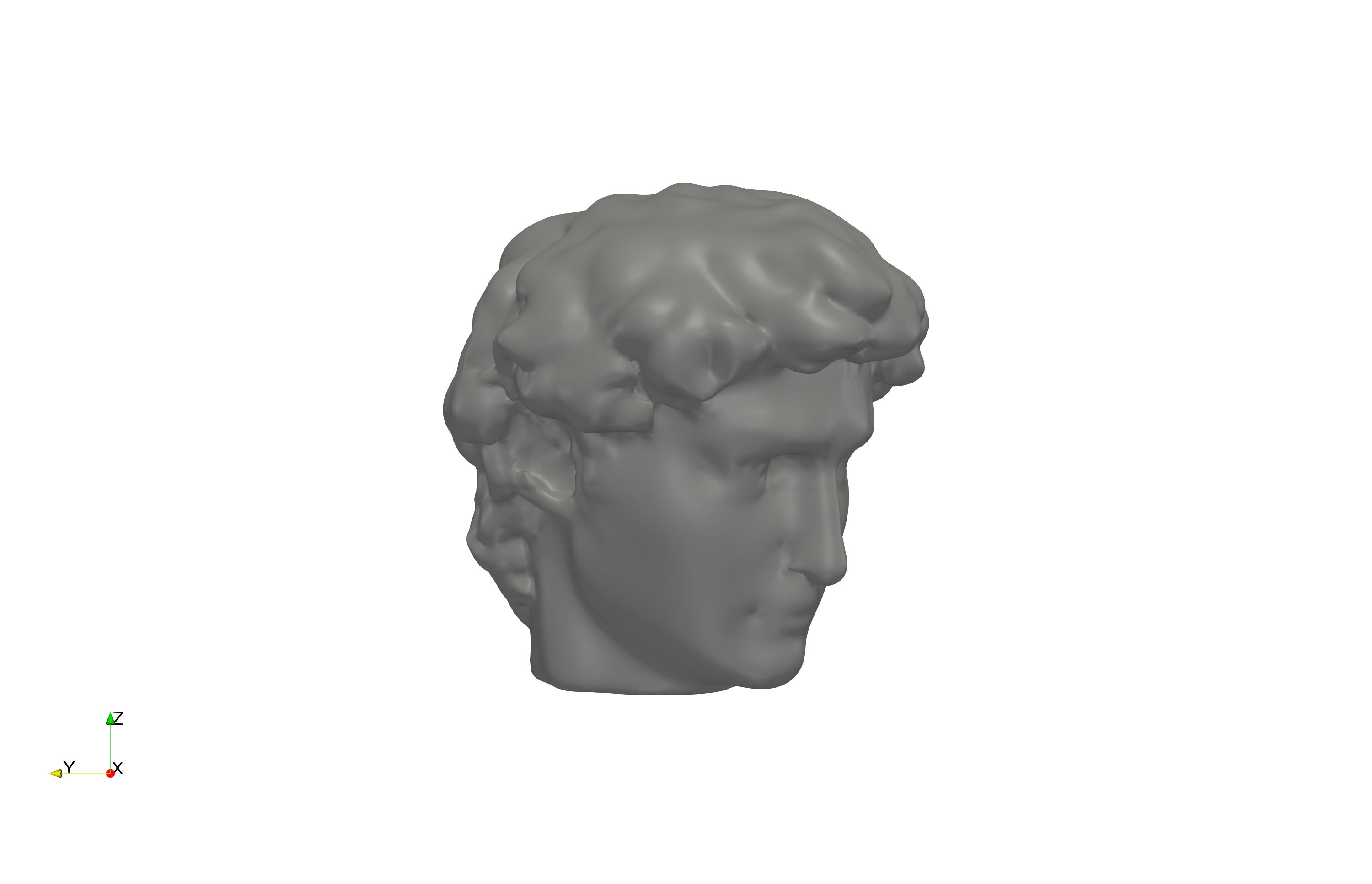}\hfill
\includegraphics[scale =0.079, clip, trim = 800 330 800 180]{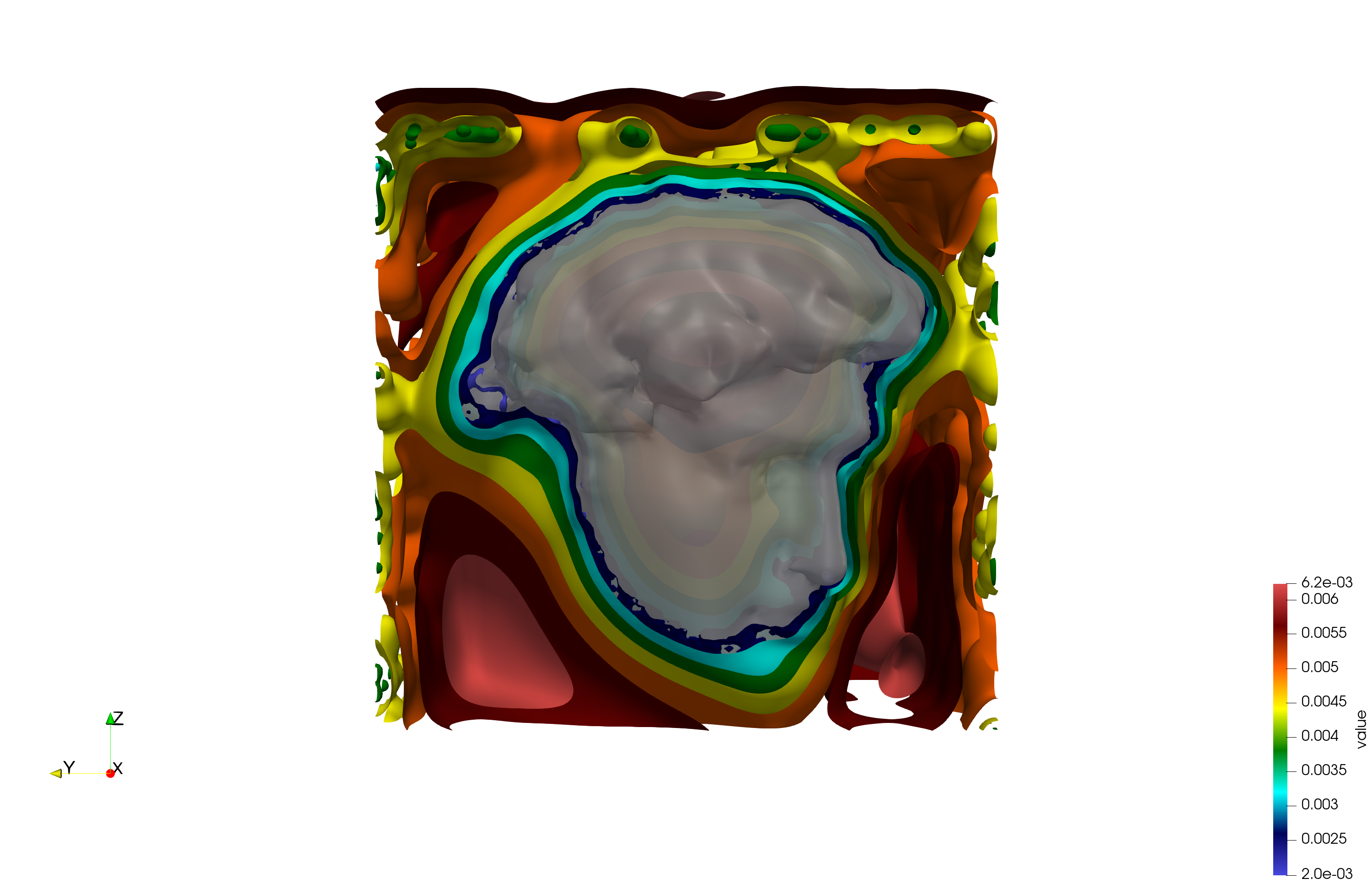}
\caption{\label{fig:davidReconst}Left panel: Data points for the surface reconstruction.
Red corresponds to a value of $1$, green to a value of $0$ and blue to a value of $-1$.
Middle panel: 0-level set of the posterior expectation evaluated at a regular grid.
Right panel: Standard deviation for the reconstruction (blue is small, red is large).}
\end{center}
\end{figure}
We consider Gaussian process learning of implicit surfaces.
In accordance with \cite{WF07}, we consider a closed surface \(S=\partial\Omega\)
of dimension \(d-1\), given by the 0-level set of the function 
\[
f\colon\Rbb^d\to\Rbb,\quad f({\bm x})\begin{cases}=0,& {\bm x}\in S,\\
>0, &{\bm x}\in\Omega,\\
<0,&{\bm x}\in\Rbb^d\setminus\overline{\Omega},
\end{cases}
\]
i.e.,
\[
S=\{{\bm x}\in\Rbb^d:f({\bm x})=0\}.
\]
For the function \(f\), we impose a Gaussian process model with
covariance function given by the exponential kernel
\[
\kernel({\bm x},{\bm y})=\frac{1}{N}e^{-6\|{\bm x}-{\bm y}\|_2}
\]
and prior mean zero.
Then, given the data sites \(X\) of size $N\isdef |X|$ and the 
noisy measurements \({\bm y}=f(X)+{\bm\varepsilon}\), where 
\({\bm\varepsilon}\sim\Ncal({\bm 0},\mu{\bm I})\), the posterior 
distribution for the data sites \(Z\subset\Rbb^3\) is determined by
\begin{align*}
\Ebb[f(Z)|X,{\bm y}]&={\bm K}_{ZX}({\bm K}_{XX}+\mu{\bm I})^{-1}{\bm y},\\
\operatorname{Cov}[f(Z)|X,{\bm y}]&=
{\bm K}_{ZZ}-{\bm K}_{ZX}({\bm K}_{XX}+\mu{\bm I})^{-1}{\bm K}_{ZX}^\intercal.
\end{align*}
Herein, setting \(M\isdef |Z|\), we have \({\bm K}_{XX}=[\kernel(X,X)]
\in\Rbb^{N\times N}\), \({\bm K}_{ZX}=[\kernel(Z,X)]\in\Rbb^{M\times N}\), 
\({\bm K}_{ZZ}=[\kernel(Z,Z)]\in\Rbb^{M\times M}\).

The matrix \({\bm K}_{ZX}\) can efficiently be computed by using one samplet tree for \(Z\)
and a second samplet tree for \(X\), while \(({\bm K}_{XX}+\mu{\bm I})^{-1}\)
can be computed as in the previous examples. 
Hence, the computation of the posterior
mean \(\Ebb[f(Z)|X,{\bm y}]\) is straightforward.
For \(X\), we use samplets with $q+1=4$ vanishing moments, while samplets with $q+1=3$
vanishing moments
are applied for \(Z\). Moreover, we use an a-posteriori threshold of \(\tau=10^{-4}/N\)
for \({\bm K}_{ZX}^{\eta}\).

Similarly, we can evaluate the covariance in samplet coordinates. However, the evaluation
of the standard deviation \(\sqrt{\operatorname{diag}(\operatorname{Cov}[f(Z)|X,{\bm y}])}\)
requires more care. Here we just transform \({\bm K}_{ZX}\) with respect to the points
in \(X\) and evaluate the diagonal resulting in a computational cost of 
\(\mathcal{O}\big(MN\log N\big)\). 

The left panel in Figure~\ref{fig:davidReconst} shows the initial setup. 240 data points with
a value $-1$ are located on a sphere within the point cloud, $15\,507$ points with a value of $0$
are located at its
surface and $1200$ points with a value of $1$ are located on a box enclosing it.
This results in \(N=16\,947\) data points in total. The ridge 
parameter was set to \(\mu=2\cdot10^{-5}\). The conditional expectation and the
standard deviation have been computed on a regular grid with \(M=8\,000\,000\) points.
The middle panel in Figure~\ref{fig:davidReconst} shows the 0-level set while the
right panel shows the standard deviation. As expected, the standard deviation 
is lowest close to the data sites (blue is small, red is large).

\section{Conclusion}
We have presented a sparse matrix algebra
for kernel matrices in samplet coordinates.
This algebra allows for the rapid addition,
multiplication and inversion of (regularized)
kernel matrices, which operations mimic algebras of 
corresponding pseudodifferential operators.
The proposed arithmetic operations extend to 
$S$-formatted, approximate representations of 
holomorphic functions of $S$-formatted 
approximations of self-adjoint operators, which
are likewise realized in log-linear cost.
While the addition is straightforward,
we have derived an error and
cost analysis for the multiplication,
and for the approximate evaluation of holomorphic 
operator-functions in log-linear cost.
The $S$-formatted approximate
inversion is realized by selective inversion
for sparse matrices, which also enables the
computation of general matrix functions by
the contour integral approach. 
The numerical
benchmarks corroborate the theoretical findings
for data sets in two and three dimensions.
As a relevant example from computer graphics,
we have considered Gaussian
process learning for the computation of a signed
distance function from scattered data.

We expect the presently developed fast kernel matrix algebra 
to impact various areas in machine learning and statistics, where
kernel-based approximations appear (e.g.\ \cite{BG19},
\cite{NeurOp20} and the references there).

\appendix
\section{Pseudodifferential operators}\label{sec:pseudos}
We present basic definitions and terminology from the theory
of pseudodifferential operators, in particular elements of the 
calculus of pseudodifferential operators, going back to Seeley
\cite{Seeley66,Seeley69}. We adopt the notation for the statements 
of results on pseudodifferential operators from the monographs of 
H\"ormander \cite{HorIII} and Taylor \cite{Taylor81}, 
but hasten to add that \emph{infinite smoothness of kernels
in the corresponding operator calculi is not essential in 
S-formatted matrix algebra, as the S-compression is based 
on Calder\'on-Zygmund estimates \eqref{eq:kernel_estimate} 
to order $q+1$.}
\subsection{Symbols}\label{sec:Symb}
For an order $r\in\mathbb{R}$ and an open and bounded 
domain $\O\subset\mathbb{R}^d$ with smooth boundary, 
the symbol class $S^r(\O\times\mathbb{R}^d)$ consists of 
functions $a\in C^\infty(\O\times\mathbb{R}^d)$ such that,
for any $K\Subset\O$ and for every ${\bm\alpha},
{\bm\beta}\in\mathbb{N}^d$, there exist constants 
$C_{{\bm\alpha},{\bm\beta}}(K)>0$ such that 
\begin{equation}\label{eq:SymDeriv}
\forall {\bm x}\in K,\ {\bm\xi}\in\mathbb{R}^d:\quad 
\left| \partial_{\bm x}^{\bm\alpha} \partial_{\bm\xi}^{\bm\beta} 
a({\bm x},{\bm\xi}) \right| 
\leq 
C_{{\bm\alpha},{\bm\beta}}(K) \langle{\bm\xi}\rangle^{r-|{\bm\beta}|},
\end{equation}
where $\langle{\bm\xi}\rangle = (1+\|{\bm\xi}\|_2^2)^{1/2}$. 
The class $S^r(\O\times\mathbb{R}^d)$ is contained in the 
H\"ormander class $S^{r}_{1,0}(\O\times\mathbb{R}^d)$; 
we shall not require the general classes $S^r_{\rho,\delta}
(\O\times\mathbb{R}^d)$, cf.\ \cite{HorIII}, and, therefore, 
omit the fine indices.

A function $a_r\in C^\infty(\O\times\mathbb{R}^d\backslash
\{ 0 \})$ is called {\em positively homogeneous of degree $r$} if
\[
\forall {\bm x}\in \Omega,\; \forall t>0, \; {\bm 0}\ne {\bm\xi} \in\mathbb{R}^d: 
\quad a_r({\bm x},t{\bm\xi}) = t^r a_r({\bm x},{\bm\xi}).
\]
Note that then $\chi({\bm\xi}) a_r({\bm x},{\bm\xi})\in 
S^r(\O\times\mathbb{R}^d)$ for any smooth, nonnegative 
cut-off function $\chi$ which vanishes identically for $\|{\bm\xi}\|_2 
\le 1/2$ and $\chi({\bm\xi}) \equiv 1$ for $\|{\bm\xi}\|_2 \geq 1$.
For a symbol $a\in S^r(\O\times\mathbb{R}^d)$, the 
corresponding pseudodifferential operator $\mathcal{A}$ 
is defined for $u\in C_0^\infty(\O)$ via the oscillatory 
integral, cf.\ \cite{HorI},
\begin{equation}\label{eq:DefPsiDO}
\mathcal{A}({\bm x},-i\partial_{\bm x})u({\bm x}) 
= (2\pi)^{-d/2} \int_{{\bm\xi}\in\mathbb{R}^d} 
e^{i\langle {\bm x},{\bm\xi}\rangle} 
a({\bm x},{\bm\xi}) \hat{u}({\bm\xi})\d{\bm\xi},\quad{\bm x}\in \O.
\end{equation}
The set of all pseudodifferential operators $\mathcal{A}$ 
generated via \eqref{eq:DefPsiDO} from a symbol $a\in 
S^r(\O\times\mathbb{R}^d)$ is denoted by $OPS^r(\O)$.

A symbol $a\in S^r(\O\times\mathbb{R}^d)$ is called 
{\em classical symbol of order $r\in\mathbb{R}$} if
for every $k\in\mathbb{N}$ there exist functions 
$a_{r-k}({\bm x},{\bm\xi})\in S^{r-k}(\O\times\mathbb{R}^d)$
such that $a \sim \sum_{k} a_{r-k}$ (in the sense of asymptotic 
expansions of symbols, compare \cite{HorIII}), where $a_{r-k}$ 
is homogeneous of degree $r-k$, i.e., there holds $a_{r-k}({\bm x},t{\bm\xi}) 
= t^{r-k}a_{r-k}({\bm x},{\bm\xi})$ for every $t>0$ and for every 
${\bm\xi}\in\mathbb{R}^d$ with $\|{\bm\xi}\|_2\ge 1$. As a consequence 
of the asymptotic expansion of $a\in S^r_{cl}(\O\times\mathbb{R}^d)$,
for every ${\bm\alpha},{\bm\beta}\in\mathbb{N}^d$ and for every 
$K\Subset\O$ exists a constant $c_{{\bm\alpha},{\bm\beta}}
(K)\in (0,1)$ such that for every $N\in\mathbb{N}$ holds
\begin{equation}\label{eq:AsyClass}
\forall{\bm x}\in K,\ {\bm\xi}\in\mathbb{R}^d:\quad 
\left| 
\partial^{\bm\alpha}_{\bm x}\partial^{\bm\beta}_{\bm\xi} \left(a({\bm x},{\bm\xi}) 
- \sum_{k=0}^N a_{r-k}({\bm x},{\bm\xi})\right) \right|
\leq 
c_{{\bm\alpha},{\bm\beta}}(K)
\langle {\bm\xi}\rangle^{r-N-|{\bm\beta}|-1}.
\end{equation}

\subsection{Calculus}\label{sec:calculus}
Pseudodifferential operators admit calculi which are crucial 
for the subsequent matrix arithmetic. We collect properties 
of the calculi in $S^r_{cl}(\O\times\mathbb{R}^d)$ that are
required throughout the article.

\begin{proposition}\label{prop:PsDoCalc}
\begin{enumerate}
\item
$\Acal\in OPS^r_{cl}$ implies $\Acal^\star \in OPS^r_{cl}$.
\item
$\Acal\in OPS^r_{cl}$ and $\Bcal\in OPS^t_{cl}$ implies
$\Acal+\Bcal\in OPS^{\max\{r,t\}}_{cl}$.
\item
$\Acal\in OPS^r_{cl}$ and  $\Bcal\in OPS^t_{cl}$ implies
$\Acal\circ\Bcal\in OPS^{r+t}_{cl}$.
\item
If $\Acal\in OPS^r_{cl}$ is invertible and elliptic,
then there holds $A^{-1} \in OPS^{-r}_{cl}$.
\end{enumerate}
\end{proposition}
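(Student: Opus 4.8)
All four statements are classical facts of the Kohn--Nirenberg symbol calculus, and one may simply cite \cite[Ch.~XVIII]{HorIII} or \cite[Ch.~II]{Taylor81}; below I sketch the route I would take. Statement~(2) is immediate: if $r\ge t$ then $\langle{\bm\xi}\rangle^{t}\le\langle{\bm\xi}\rangle^{r}$, so $S^{t}(\O\times\R^{d})\subset S^{r}(\O\times\R^{d})$ and hence $a+b\in S^{r}$ whenever $a\in S^{r}$, $b\in S^{t}$; declaring $b_{r-k}\equiv 0$ for $r-k>t$, the asymptotic expansion of $a+b$ is the termwise sum $\sum_{k}(a_{r-k}+b_{r-k})$, which exhibits $\Acal+\Bcal$ as an element of $OPS^{\max\{r,t\}}_{cl}$.

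For~(1) and~(3) the plan is to invoke the two basic composition/adjoint formulas of the calculus. Passing to properly supported representatives (so that the products are well defined), the operator $\Acal\circ\Bcal$ has symbol $a\# b$ with
\[
a\# b\;\sim\;\sum_{{\bm\alpha}\in\N^{d}}\frac{1}{{\bm\alpha}!}\,\partial_{\bm\xi}^{\bm\alpha}a\cdot D_{\bm x}^{\bm\alpha}b ,
\]
and $\Acal^{\star}$ has symbol $a^{\star}\sim\sum_{{\bm\alpha}}\frac{1}{{\bm\alpha}!}\partial_{\bm\xi}^{\bm\alpha}D_{\bm x}^{\bm\alpha}\overline{a}$; that $a\# b\in S^{r+t}$ and $a^{\star}\in S^{r}$ is the Kohn--Nirenberg theorem. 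The only point to verify is classicality. Substituting $a\sim\sum_{j}a_{r-j}$ and $b\sim\sum_{k}b_{t-k}$, the generic term $\partial_{\bm\xi}^{\bm\alpha}a_{r-j}\,D_{\bm x}^{\bm\alpha}b_{t-k}$ is positively homogeneous of degree $(r-j-|{\bm\alpha}|)+(t-k)=(r+t)-(j+k+|{\bm\alpha}|)$ on $\|{\bm\xi}\|_{2}\ge 1$; collecting all terms with $j+k+|{\bm\alpha}|=\ell$ produces the homogeneous component of degree $(r+t)-\ell$, and the remainder bound \eqref{eq:AsyClass} for $a\# b$ follows from those for $a$ and $b$ via the Leibniz rule. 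The same bookkeeping (now with the degree dropping by $2|{\bm\alpha}|$) handles $a^{\star}$, so $\Acal\circ\Bcal\in OPS^{r+t}_{cl}$ and $\Acal^{\star}\in OPS^{r}_{cl}$.

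For~(4) the plan is the standard elliptic parametrix construction followed by an upgrading step. Ellipticity of $\Acal\in OPS^{r}_{cl}$ means its principal symbol satisfies $a_{r}({\bm x},{\bm\xi})\ne 0$ for ${\bm\xi}\ne 0$. Setting $b_{-r}\isdef\chi/a_{r}$ with $\chi$ a cut-off vanishing near ${\bm\xi}={\bm 0}$, one solves recursively, using the composition formula of~(3), the triangular system forcing the homogeneous components of the symbol of $\Bcal\circ\Acal$ to vanish in all negative and zero degrees; the resulting $b_{-r-k}$ are explicit expressions in $1/a_{r}$ and the $a_{r-j}$ and their derivatives, homogeneous of degree $-r-k$, so that $b\isdef\sum_{k}b_{-r-k}$ (Borel-summed) lies in $S^{-r}_{cl}$ and $\Bcal\circ\Acal=I+\Rcal$ with $\Rcal$ smoothing; a symmetric construction gives a right parametrix, and the two agree modulo smoothing operators, so $\Bcal\in OPS^{-r}_{cl}$ is a two-sided parametrix. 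If $\Acal$ is moreover genuinely invertible on the relevant Sobolev scale, then $\Acal^{-1}-\Bcal=-\Acal^{-1}(\Acal\Bcal-I)$ is the composition of the Sobolev-continuous operator $\Acal^{-1}$ with a smoothing operator, hence smoothing, i.e.\ a pseudodifferential operator of order $-\infty$; therefore $\Acal^{-1}=\Bcal+(\text{smoothing})\in OPS^{-r}_{cl}$.

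The main obstacle is~(4): the inductive parametrix construction requires checking that every symbol component $b_{-r-k}$ produced by the recursion is again classical (positively homogeneous of the right degree, with the estimates \eqref{eq:SymDeriv}), and the passage from ``parametrix'' to ``genuine inverse'' uses mapping properties of $\Acal^{-1}$ on Sobolev spaces together with the fact that a smoothing operator belongs to $OPS^{-\infty}$. A more pedantic point, swept under the rug above, is that on a bounded domain with boundary the composition of pseudodifferential operators needs care near $\partial\O$ (properly supported operators, transmission conditions); in the situation relevant to this paper one reduces to the setting of a closed manifold or of $\R^{d}$, where the calculus summarized in Section~\ref{sec:Symb} applies verbatim.
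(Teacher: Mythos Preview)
Your proposal is correct and in fact far more detailed than what the paper does: the paper's entire proof is the single sentence ``The asserted properties for $OPS^r_{cl}$ are standard properties for this algebra,'' i.e.\ a bare appeal to the literature (\cite{HorIII,Taylor81}). You have supplied precisely the standard arguments that this appeal is pointing to---the Kohn--Nirenberg composition and adjoint formulas together with the homogeneity bookkeeping for classicality, and the elliptic parametrix construction upgraded to a genuine inverse via the smoothing remainder---so there is no methodological difference, only a difference in level of detail.
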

\begin{proof} 
The asserted properties for $OPS^r_{cl}$ are standard properties
for this algebra.
\end{proof}

In case of the Mat\'ern kernels, expanding \eqref{eq:MaternSymbol}
asymptotically, as $\|{\bm {\bm\xi}}\|_2\to \infty$, and comparing with 
\eqref{eq:AsyClass}, we deduce that the associated integral 
operator satisfies $\cK_\nu\in OPS_{cl}^{-2\nu-d}$. It follows also 
from the symbolic calculus in Proposition \ref{prop:PsDoCalc} that 
the inverse $\cK_\nu^{-1}\in OPS_{cl}^{2\nu+d}$. 
Indeed, the symbol of the inverse corresponds to the differential operator 
$\mathcal{A}_\nu = \alpha^{-1}(\id-\frac{\ell^2}{2\nu}\Delta)^{\nu+d/2}$ 
which is of order $2\nu+d$. 
\subsection{Kernels}\label{sec:kernels}
Every continuous function on the cartesian product of
two domains $\Omega_1$ and $\Omega_2$, $\kernel\in 
C(\Omega_1 \times\Omega_2)$, defines an integral operator from
$C(\Omega_2)$ to $C(\Omega_1)$ by the formula 
\begin{equation}\label{eq:defcK}
(\cK\phi)({\bm x}_1) = \int_{\Omega_2}
\kernel({\bm x}_1,{\bm x}_2) \phi({\bm x}_2)\d{\bm x}_2.
\end{equation}
For such kernel functions, 
we have particularly, cf.\ \cite[Eq.\ (5.2.1)]{HorI},
\begin{equation}\label{eq:cKTens} 
\langle\cK\phi,\psi\rangle = \langle\kernel,\psi\otimes \phi\rangle
\quad 
\text{for all}\
\psi \in \dom(\Omega_1),\; \phi \in \dom(\Omega_2),
\end{equation}
where we define the space of test functions
\(\dom(\Omega)\isdef C_0^\infty(\Omega)\) as usual.
The characterization \eqref{eq:cKTens} can be extended 
to distributions $\kernel\in \dom'(\Omega_1\times\Omega_2)$ if $\cK\phi$ 
is allowed to be a distribution. Especially, according to 
the (classical) {\em Schwartz Kernel Theorem}, a (distributional)
kernel corresponds in a one-to-one fashion to
a linear operator and vice versa.

\begin{proposition}[Schwartz Kernel Theorem {\cite[Thm.\ 5.2.1]{HorI}}]
\label{prop:KerThm} 
Every distributional kernel $\kernel\in \dom'(\Omega_1\times \Omega_2)$ 
induces, via \eqref{eq:cKTens}, a continuous, linear map 
from $\dom(\Omega_2)$ to $\dom'(\Omega_1)$. Conversely, for 
every linear map $\cK$, there exists a unique distribution 
$\cK$ such that \eqref{eq:cKTens} holds. The distribution $\kernel$ 
is called {\em (distributional) kernel} of $\cK$.
\end{proposition}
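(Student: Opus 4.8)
The plan is to establish the two implications separately: the forward direction, that a distributional kernel defines a continuous operator $\dom(\Omega_2)\to\dom'(\Omega_1)$, is a routine seminorm estimate, while the converse, that every such operator arises from a (unique) kernel, is the substantive part and rests on the density of algebraic tensor products in $\dom(\Omega_1\times\Omega_2)$ together with an argument upgrading separate continuity to joint continuity.

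First I would treat the forward direction. Given $\kernel\in\dom'(\Omega_1\times\Omega_2)$ and a fixed $\phi\in\dom(\Omega_2)$, I define $\cK\phi$ through $\langle\cK\phi,\psi\rangle\isdef\langle\kernel,\psi\otimes\phi\rangle$ for $\psi\in\dom(\Omega_1)$. The bilinear map $(\psi,\phi)\mapsto\psi\otimes\phi$ is continuous from $\dom(\Omega_1)\times\dom(\Omega_2)$ into $\dom(\Omega_1\times\Omega_2)$: it sends test functions supported in $K_1$ and $K_2$ to test functions supported in $K_1\times K_2$, with every mixed derivative bounded by a product of one-variable derivatives. Composing this with the continuity estimate satisfied by $\kernel$ on compact subsets of $\Omega_1\times\Omega_2$ shows at once that $\psi\mapsto\langle\cK\phi,\psi\rangle$ is a distribution on $\Omega_1$ and, letting $\phi$ vary, that $\phi\mapsto\cK\phi$ is continuous from $\dom(\Omega_2)$ into $\dom'(\Omega_1)$.

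For the converse I would start from a continuous linear map $\cK\colon\dom(\Omega_2)\to\dom'(\Omega_1)$ and form the bilinear form $B(\psi,\phi)\isdef\langle\cK\phi,\psi\rangle$, which by hypothesis is separately continuous. The decisive step is to promote this to something jointly strong: since $\dom(\Omega_1)$ and $\dom(\Omega_2)$ are strict (LF)-spaces, hence barrelled, a Banach--Steinhaus argument shows every separately continuous bilinear form on their product is hypocontinuous with respect to bounded sets, and this suffices to extend $B$ continuously to the completed projective tensor product $\dom(\Omega_1)\widehat{\otimes}_\pi\dom(\Omega_2)$. Invoking the nuclearity of $\dom$, this space is canonically identified with $\dom(\Omega_1\times\Omega_2)$, so $B$ extends to a continuous functional $\kernel\in\dom'(\Omega_1\times\Omega_2)$ with $\langle\kernel,\psi\otimes\phi\rangle=B(\psi,\phi)$, which is exactly \eqref{eq:cKTens}. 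An alternative route bypassing abstract nuclearity is a double mollification: regularize $\cK$ separately in the $\Omega_1$- and $\Omega_2$-variables to obtain operators with smooth, compactly supported Schwartz kernels, derive uniform bounds on these kernels from the continuity of $\cK$ and the uniform boundedness principle, and pass to the distributional limit.

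Uniqueness is then immediate: if $\kernel_1$ and $\kernel_2$ both satisfy \eqref{eq:cKTens}, their difference annihilates every product test function $\psi\otimes\phi$, and since finite linear combinations of such products are dense in $\dom(\Omega_1\times\Omega_2)$, we conclude $\kernel_1=\kernel_2$. The main obstacle is precisely the one step where separate continuity of $B$ is converted into an estimate strong enough to extend across the tensor product; this is the content of the nuclearity of $\dom(\Omega)$ and is the only place where more than routine seminorm bookkeeping is required, everything on the operator side of the correspondence being formal.
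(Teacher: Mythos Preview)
The paper does not supply a proof of this proposition at all: it is stated as a classical result and attributed to H\"ormander \cite[Thm.~5.2.1]{HorI}, with no argument given. Your outline is a reasonable sketch of the standard proof (forward direction by continuity of the tensor map, converse via barrelledness/nuclearity or mollification, uniqueness by density of tensors), but there is nothing in the paper to compare it against beyond the bare citation.
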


Via the Schwartz Kernel Theorem, every classical pseudodifferential 
operator $\Acal\in OPS^r_{cl}(\O)$ with symbol $a\in S^r_{cl}(\O\times
\mathbb{R} ^d)$ can be written as a (distributional) integral operator 
with (distributional) Schwartz kernel $\kernel_{\Acal}$. 
If the order $r$
of the pseudodifferential operator $\Acal\in OPS^r_{cl}(\O)$
is smaller than $-d$, 
its distributional kernel is continuous and satisfies \eqref{eq:kernel_estimate}.

\subsection*{Acknowledgements}
\noindent HH was funded in parts by the SNSF by the grant 
``Adaptive Boundary Element Methods Using Anisotropic Wavelets'' 
(200021\_192041). MM was funded in parts by the SNSF starting grant 
``Multiresolution methods for unstructured data'' (TMSGI2\_211684).
\bibliographystyle{plain}

\end{document}